\newtheorem{theorem}{Theorem}
\newtheorem{lemma}[theorem]{Lemma}
\newtheorem{proposition}[theorem]{Proposition}
\newtheorem{corollary}[theorem]{Corollary}
\newtheorem{problem}[theorem]{Problem}
\newtheorem*{claim}{Claim}
\newcommand{\cS}{\mathcal{S}}
\newcommand{\cH}{\mathcal{H}}
\newcommand{\cR}{\mathcal{R}}
\newcommand{\cA}{\mathcal{A}}
\newcommand{\cE}{\mathcal{E}}
\newcommand{\Real}{\ensuremath{\mathbb{R}}}
\newcommand{\n}{\ensuremath{\operatorname{next}}}
\newenvironment{claimproof}{%
\noindent%
\textit{Proof of Claim.}%
}
{
\hfill $\triangle$%
\medskip
}
\begin{document}

\title{Density of Range Capturing Hypergraphs}

\author[M.~Axenovich]{Maria Axenovich}
\author[T.~Ueckerdt]{Torsten Ueckerdt}

\address[M.~Axenovich]{Karlsruher Institut f\"ur Technologie, Karlsruhe, Germany}
\email{maria.aksenovich@kit.edu}

\address[T.~Ueckerdt]{Karlsruher Institut f\"ur Technologie, Karlsruhe, Germany}
\email{torsten.ueckerdt@kit.edu}


\begin{abstract}
 For a finite set $X$ of points in the plane, a set $S$ in the plane, and a positive integer $k$, we say that a $k$-element subset $Y$ of $X$ is captured by $S$ if there is a homothetic copy $S'$ of $S$ such that $X\cap S' = Y$, i.e., $S'$ contains exactly $k$ elements from $X$.
 A $k$-uniform $S$-capturing hypergraph $\cH = \cH(X,S,k)$ has a vertex set $X$ and a hyperedge set consisting of all $k$-element subsets of $X$ captured by $S$.
 In case when $k=2$ and $S$ is convex these graphs are planar graphs, known as \emph{convex distance function Delaunay graphs}.
 
 In this paper we prove that for any $k\geq 2$, any $X$, and any convex compact set $S$, the number of hyperedges in $\cH(X,S,k)$ is at most $(2k-1)|X| - k^2 + 1 - \sum_{i=1}^{k-1}a_i$, where $a_i$ is the number of $i$-element subsets of $X$ that can be separated from the rest of $X$ with a straight line.
 In particular, this bound is independent of $S$ and indeed the bound is tight for all ``round'' sets $S$ and point sets $X$ in general position with respect to $S$.
 
 This refines a general result of Buzaglo, Pinchasi and Rote~\cite{BuPiRo13} stating that every pseudodisc topological hypergraph with vertex set $X$ has $O(k^2|X|)$ hyperedges of size $k$ or less.
\end{abstract}

\maketitle

\noindent
{\small 
\emph{Keywords}: Hypergraph density, geometric hypergraph, range-capturing hypergraph, homothets, convex distance function, Delaunay graph.
}

\section{Introduction}
 
Let $S$ and $X$ be two subsets of the Euclidean plane $\Real^2$ and $k$ be a positive integer.
In this paper, $S$ is a convex compact set containing the origin and $X$ is a finite set.
An {\bf $S$-\emph{range}} is a homothetic copy of $S$, i.e., a set obtained from $S$ by scaling with a positive factor with respect to the origin and an arbitrary translation.
In other words, an $S$-range is obtained from $S$ by first contraction or dilation and then translation, where two $S$-ranges are \emph{contractions/dilations} of one-another if in their corresponding mappings the origin is mapped to the same point.

We say that an $S$-range $S'$ \emph{captures} a subset $Y$ of $X$ if $X\cap S' = Y$.
An {\bf $S$-capturing hypergraph} is a hypergraph $\cH = (X,\cE)$ with vertex set $X$ and edge set $\cE \subseteq 2^X$ such that for every $Y\in \cE$ there is an $S$-range $S'$ that captures $E$.

In this paper we consider {\bf $k$-uniform} $S$-capturing hypergraphs, that is, those hypergraphs $\cH = \cH(X,S,k)$ with vertex set $X$ and hyperedge set consisting of \emph{all} $k$-element subsets of $X$ captured by $S$.
I.e., the hyperedges correspond to $S$-ranges containing exactly $k$ elements from $X$.
These hypergraphs are often referred to as \emph{range-capturing hypergraphs} or \emph{range spaces}.
The importance of studying $k$-uniform $S$-capturing hypergraphs was emphasized by their connection to epsilon nets and covering problems of the plane~\cite{PT11,Pa86,Mat}.
See also some related literature for geometric hypergraphs,~\cite{KNPS06,KKN04,BuPiRo13,wood2013hypergraph,PaSh09,BMP,KP12,FP84,PT10,P13,PT10b,CPST09,TT07,CKMU12,PW90, Smo}.

The first non-trivial case $k=2$, i.e., when $\cH(X,S,k)$ is an ordinary graph, was first considered by Chew and Dyrsdale in 1985~\cite{CD85}.
They showed that if $S$ is convex and compact, then $\cH(X,S,2)$ is a planar graph, called the \emph{Delaunay graph} of $X$ for the \emph{convex distance function} defined by $S$.
In particular, $\cH(X,S,2)$ has at most $3|X|-6$ edges and this bound can be achieved.
We remark that it follows from Schnyder's realizer~\cite{S90} that every maximally planar graph can be written as $\cH(X,S,2)$ for some $X$ and $S$ being any triangle.

\subsection{Related work.}
Recently, Buzaglo, Pinchasi and Rote~\cite{BuPiRo13} considered the maximum number of hyperedges of size $k$ or less in a pseudodisc topological hypergraph on $n$ vertices.
Here, a \emph{family of pseudodiscs} is a set of closed Jordan curves such that any two of these curves either do not intersect or intersect in exactly two points. 
A hypergraph is called \emph{pseudodisc topological hypergraph} if its vertex set $X$ is a set of points in the plane and for every hyperedge $Y$ there is a closed Jordan curve such that the bounded region of the plane obtained by deleting the curve contains $Y$ and no point from $X \setminus Y$, and the set of all these Jordan curves is a family of pseudodiscs.

The authors of~\cite{BuPiRo13} observed that pseudodisc topological hypergraphs have VC-di\-men\-sion~\cite{VC} at most~$3$, and that using this fact the number of hyperedges can be bounded from above.
For this, a version of the Perles-Sauer-Shelah theorem~\cite{Sa,Shelah} is applied.
Let, for a set $A$ and a positive integer $d$, $\binom{A}{\leq d}$ denote the set of all subsets of $A$ of size at most $d$.

\begin{theorem}[Perles-Sauer-Shelah Theorem]
 Let $F = \{A_1, \ldots ,A_m\}$  be a family of distinct subsets of $\{1,2,\ldots,n\}$ and let $F$ have VC-dimension at most $d$. Then $m\leq \left|\bigcup_{i=1}^{m}\binom{A_i}{\leq d}\right|.$  
\end{theorem}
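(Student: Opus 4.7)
The plan is to prove this via the classical \emph{shifting technique}. For each coordinate $j\in\{1,\ldots,n\}$, I would define the shift operator $\sigma_j$ that maps a family $F\subseteq 2^{[n]}$ to the family obtained by replacing every $A\in F$ with $A\setminus\{j\}$ whenever $j\in A$ and $A\setminus\{j\}\notin F$, and leaving $A$ unchanged otherwise. A direct check shows that $\sigma_j$ is injective on $F$: if $A\neq B$ in $F$ collide after shifting, the only possibility is that (say) $A$ is shifted while $B$ is not, so $A\setminus\{j\}=B\in F$, contradicting the shift rule. Hence $|\sigma_j(F)|=|F|$.

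The next step is to record three monotonicity properties under $\sigma_j$. First, the quantity $\cU(F):=\bigcup_{A\in F}\binom{A}{\leq d}$ can only shrink in cardinality, because shifting replaces each set by a subset. Second, applying the operators $\sigma_1,\ldots,\sigma_n$ repeatedly must stabilize, because $\sum_{A\in F}|A|$ strictly decreases whenever some genuine shift occurs; the stable family $F^*$ is \emph{downward closed} in the sense that $B\subseteq A\in F^*$ implies $B\in F^*$. Third, the VC-dimension does not increase under $\sigma_j$.

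The main obstacle is proving this last claim, and I would argue it by contrapositive: if $T\subseteq[n]$ is shattered by $\sigma_j(F)$, then $T$ is shattered by $F$. The case $j\notin T$ is immediate since traces on $T$ are unchanged. When $j\in T$, any trace $B\subseteq T$ with $j\in B$ must be realized by a set of $\sigma_j(F)$ that still contains $j$, hence was not shifted and already lies in $F$. When $j\notin B$ and $B$ is realized only by a shifted set $A\setminus\{j\}$ (with $A\in F$), consider the set $A^*\in F$ realizing $B\cup\{j\}$ guaranteed by the previous case: if $A^*\setminus\{j\}$ were not in $F$, then $A^*$ would itself have been shifted and so removed from $\sigma_j(F)$, contradicting its presence there; thus $A^*\setminus\{j\}\in F$ and it realizes $B$. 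So $T$ is shattered by $F$, as desired.

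Finally, I would pass to the stabilized family $F^*$, which is downward closed and has VC-dimension at most $d$. In a downward closed family, any set $A$ of size greater than $d$ has all of its subsets in the family, so $A$ itself is shattered; hence every $A\in F^*$ satisfies $|A|\leq d$. This gives the equality $F^*=\bigcup_{A\in F^*}\binom{A}{\leq d}=\cU(F^*)$. Chaining the three monotonicities yields
\[
m=|F|=|F^*|=|\cU(F^*)|\leq |\cU(F)|=\Bigl|\bigcup_{i=1}^{m}\binom{A_i}{\leq d}\Bigr|,
\]
which is exactly the claimed bound.
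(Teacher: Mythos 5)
The paper does not prove this statement: it is quoted as a known result with citations to Sauer and Shelah, so there is no in-paper argument to compare yours against. Judged on its own, your shifting proof is correct and self-contained. The injectivity of $\sigma_j$ is argued properly; the key monotonicity, that $\sigma_j$ does not increase VC-dimension, is handled correctly in the delicate case $j\in T$ (a member of $\sigma_j(F)$ still containing $j$ must be an unshifted member whose $j$-deleted copy also lies in $F$, which simultaneously realizes the trace $B$); the union $\cU(F)=\bigcup_{A\in F}\binom{A}{\leq d}$ can only shrink because every shifted set is a subset of a member of $F$; and the stabilized family is downward closed, hence every member is shattered by the family itself, forcing all members to have size at most $d$ and giving $F^*=\cU(F^*)$. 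Chaining these facts yields exactly the stated bound, and crucially the \emph{localized} form $\bigl|\bigcup_i\binom{A_i}{\leq d}\bigr|$ rather than only the global $\sum_{i\leq d}\binom{n}{i}$, which is the form the paper actually uses. Two cosmetic remarks: in the sub-case $j\in T$, $j\notin B$, you can argue uniformly via the realizer of $B\cup\{j\}$ without first supposing $B$ is realized only by a shifted set (the case split is harmless but unnecessary); and it is worth noting that the alternative standard route, the Pajor-style induction showing $|F|$ is at most the number of sets shattered by $F$, gives the same localized bound, since every shattered set is a trace and hence a subset of some $A_i$ of size at most $d$.
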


Applying this theorem to the family of hyperedges in a pseudodisc topological hypergraph, one can see that the number of hyperedges in such a hypergraph is at most $O(n^3)$.
In fact, if one considers only hyperedges of size $k$ or less, a much stronger bound could be obtained. 
 
\begin{theorem}[Buzaglo, Pinchasi and Rote~\cite{BuPiRo13}]\label{thm:topological-density}
 Every pseudodisc topological hypergraph on $n$ vertices has $O(k^2n)$ hyperedges of size $k$ or less.
\end{theorem}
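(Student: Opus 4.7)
\medskip

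\noindent\textbf{Proof proposal.} The plan is to combine the two ingredients already present in the excerpt—the VC-dimension bound of $3$ and the Perles-Sauer-Shelah theorem—with a shallow-cell estimate for pseudodisc arrangements. First I would restrict to the subfamily $\cE_{\leq k}$ of hyperedges of size at most $k$; since restricting the range family cannot increase the VC-dimension, $\cE_{\leq k}$ still has VC-dimension at most $3$, and Perles-Sauer-Shelah gives
$$ m := |\cE_{\leq k}| \;\leq\; \left|\bigcup_{A\in\cE_{\leq k}} \binom{A}{\leq 3}\right|. $$
This right-hand side equals the number of subsets of $X$ of size at most $3$ that lie inside at least one hyperedge of size at most $k$. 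The size-$0$ and size-$1$ contributions together give $1+n$, so it suffices to bound the number of pairs and of triples of points of $X$ that lie together inside some pseudodisc $D$ from the given family satisfying $|D\cap X|\leq k$.

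For each $A\in\cE_{\leq k}$ I would then fix a witnessing Jordan curve from the given pseudodisc family, obtaining a collection $\cU$ of pseudodiscs each containing at most $k$ points of $X$. The pair- and triple-counts to estimate are precisely the numbers of $2$- and $3$-element subsets of $X$ lying at depth at most $k$ in the arrangement $\cU$. I would then appeal to the classical fact that a family of pseudodiscs has linear union complexity, combined with a Clarkson-Shor random-sampling argument: sampling each point of $X$ independently at rate $\Theta(1/k)$, a pseudodisc in $\cU$ of depth $j\leq k$ retains a constant number of sample points with probability $\Theta(1/k^i)$ for $i\in\{2,3\}$, and on the sample the linear-union-complexity bound applies. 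Unraveling the expectations yields $O(kn)$ for the pair count and $O(k^2 n)$ for the triple count.

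Summing over the four subset sizes then gives $m \leq 1 + n + O(kn) + O(k^2n) = O(k^2 n)$, which is the desired bound.

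The hard part will be the shallow-cell estimate, and especially its version for triples: while the linear union complexity of pseudodiscs is classical, converting it into a sharp $O(k^2 n)$ count of triples contained in depth-$\leq k$ pseudodiscs requires the random-sampling argument to be set up so that the probabilistic weights balance and no spurious logarithmic factors appear. Everything else is essentially assembly: the VC-dimension observation is given in the excerpt, the Perles-Sauer-Shelah bound is quoted as a theorem, and the reduction to counting shallow pairs and triples is immediate once the right-hand side of Perles-Sauer-Shelah is split by subset size.
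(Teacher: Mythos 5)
First, a point of reference: this paper does not prove Theorem~\ref{thm:topological-density} at all --- it is quoted from Buzaglo, Pinchasi and Rote~\cite{BuPiRo13}, and the surrounding text only records the two ingredients you also start from, namely that pseudodisc topological hypergraphs have VC-dimension at most $3$ and that the Perles--Sauer--Shelah theorem reduces the count to the number of at-most-$3$-element subsets of $X$ contained in hyperedges. Up to that reduction your skeleton is sound: restricting to the subfamily $\cE_{\leq k}$ of hyperedges of size at most $k$ cannot increase the VC-dimension, and the empty set and singletons contribute only $1+n$.

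The gap is exactly the step you flag as the hard part, and it is not merely technical: ``linear union complexity of pseudodiscs'' is not the right base fact to feed into Clarkson--Shor. Union complexity bounds the number of boundary vertices on the union of the curves; it says nothing, by itself, about how many $2$- or $3$-element subsets of a sample $R \subseteq X$ are captured by a pseudodisc of the family containing no further point of $R$. What the sampling argument actually needs as its depth-zero input are the statements that (a) the pairs of $R$ captured by a family pseudodisc avoiding the rest of $R$ form a planar graph, hence number $O(|R|)$, and (b) the number of triples of $R$ captured by a family pseudodisc avoiding the rest of $R$ is $O(|R|)$. Neither follows from union complexity, and (b) in particular is essentially the substance of the result being cited (for homothets of a fixed convex set it is the $k=3$ case of Theorem~\ref{thm:main} of this paper, but for an arbitrary pseudodisc family you cannot invoke that). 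Granted (a) and (b), your bookkeeping is fine: a pair or triple of weight at most $k$ survives as a depth-zero configuration in a $p$-sample with probability at least $p^{i}(1-p)^{k-i} = \Theta(k^{-i})$ for $i=2,3$ at $p = \Theta(1/k)$, the expected number of depth-zero configurations is $O(pn)$, and unravelling gives $O(kn)$ pairs, $O(k^{2}n)$ triples, and the theorem. So the proposal is a plausible outline consistent with the route sketched in the paper, but as written the lemmas that carry the theorem are assumed rather than proved.
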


However, the methods used to prove Theorem~\ref{thm:topological-density} do not seem to give any non-trivial bound on the number of hyperedges of size \emph{exactly $k$}.
Tight bounds are only known is case $k=2$.
Indeed, every $2$-uniform $S$-capturing hypergraph is a planar graph~\cite{CD85}, called the \emph{convex distance function Delaunay graph}, and thus has at most $3n-6$ edges.

\subsection{Our results.}
In this paper, we consider the case when every hyperedge has {\bf exactly $k$ points}.
In particular, we consider $k$-uniform $S$-capturing hypergraphs, for convex and compact sets $S$.
One can show that these hypergraphs are pseudodisc topological hypergraphs.
Indeed, the family of all homothetic copies of a fixed convex set $S$ is surely the most important example of a family of pseudodiscs.
For a finite point set $X$ of the plane, a subset $Y$ of $X$ can be \emph{separated} with a straight line if there exists a line $\ell$ such that one halfplane defined by $\ell$ contains all points in $Y$ and the other contains all points in $X - Y$.
For a positive integer $i$, let $a_i$ denote the number of $i$-subsets of $X$ that can be separated with a straight line.

\begin{theorem}\label{thm:main}
 Let $S$ be a convex compact set, $X$ be a finite point set and $k$ be a positive integer.
 Any $k$-uniform $S$-capturing hypergraph on vertex set $X$ has at most $(2k-1)|X| - k^2 + 1 - \sum_{i=1}^{k-1}a_i$ hyperedges.
 Moreover, equality holds whenever $S$ is nice and $X$ is in general position with respect to $S$.
\end{theorem}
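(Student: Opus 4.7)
My plan is to reduce to the generic case where $S$ is nice and $X$ is in general position with respect to $S$, then associate each captured subset to a canonical triangle via the minimum-scale witness homothet, assemble these triangles into a planar subdivision using the pseudodisc property of $S$-ranges, and finally apply Euler's formula to extract the bound.

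\textbf{Reduction.} First I would observe that capturing is an open condition: a small perturbation of $S$ (toward a $C^2$ strictly convex shape) and of $X$ (toward general position, i.e.\ no four points on a common $S$-range boundary) can only destroy degeneracies without losing any strictly captured $k$-subset. Thus the number of captured $k$-subsets is lower semicontinuous, and it suffices to prove both the upper bound and the matching lower bound in the generic setting.

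\textbf{Canonical witnesses and triangles.} For each captured $Y \subseteq X$ with $|Y| = j \le k$, compactness gives a minimum-scale $S$-range $S_Y$ with $X \cap S_Y = Y$, which under the generic assumptions is unique. Since the parameter space of $S$-ranges is three-dimensional (two translations plus one scale), at the minimum exactly three points of $Y$ lie on $\partial S_Y$ (whenever $j \ge 3$), and by convexity the triangle $T_Y$ they span lies inside $S_Y$. The key geometric claim is then that, for each $j \le k$, the triangles $\{T_Y : |Y|=j,\ Y \text{ captured}\}$ triangulate a region $R_j \subseteq \operatorname{conv}(X)$, with $R_1 \supseteq R_2 \supseteq \cdots \supseteq R_k$. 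The pseudodisc property of homothets of a convex compact set --- two $S$-ranges are either nested or meet in exactly two boundary points --- rules out improper crossings of two same-level triangles: such a crossing would force $S_Y$ and $S_{Y'}$ to nest, contradicting $|Y| = |Y'|$. A continuous-motion argument (sliding a witness $S_Y$ against a competing one) then shows the triangles fit without gaps, and the boundary $\partial R_j$ decomposes into convex-hull edges of $X$ together with ``separating edges'' each of which extends to a line witnessing that some $i$-subset of $X$ with $i \le j$ is separable from its complement.

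\textbf{Euler count.} Applying Euler's formula level-by-level to the triangulations of $R_1, \ldots, R_k$, and performing the standard vertex-edge-face double count (each interior edge lies on two triangles, each boundary edge on one) produces a linear recurrence relating $e_j$ (the number of captured $j$-subsets), $n$, and the number of boundary edges of $R_j$. Identifying those boundary edges with $i$-separable subsets ($i \le j$) converts the recurrence into one involving the $a_i$'s; unrolling from $j=1$ (where $e_1 = n$) up to $j = k$ yields $e_k \le (2k-1)n - k^2 + 1 - \sum_{i=1}^{k-1} a_i$, with equality throughout in the generic case, thereby establishing both the upper bound and the ``moreover'' part.

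The main obstacle is the planar-subdivision claim in the second step. Ruling out crossings is a direct pseudodisc argument, but proving that the triangles actually tile $R_j$ (no gaps) requires tracking how $T_Y$ evolves as the minimum-scale witness $S_Y$ slides against a continuous family of competitors, and the precise identification of the boundary $\partial R_j$ with the list of $i$-separable subsets is the combinatorial heart that produces the exact $-\sum_{i=1}^{k-1} a_i$ correction. Once this subdivision is in place, the Euler count in the fourth step is a routine, if careful, piece of bookkeeping.
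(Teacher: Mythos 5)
There is a genuine gap, and it sits exactly where you yourself locate ``the main obstacle''. First, your canonical witness does not have the contact structure you need: the minimum-scale $S$-range capturing $Y$ need not have three points of $Y$ on its boundary. Already for a disc the smallest capturing circle is frequently determined by a diametral pair, and the paper's Type~II hyperedges (which exist for every $k\ge 2$; indeed every $2$-element hyperedge is of this kind) admit \emph{no} capturing range with three points of $X$ on its boundary, so $T_Y$ is undefined for them --- the heuristic ``three-dimensional parameter space forces three contacts'' fails because the scale can be pinned by only two containment constraints or by the exclusion constraint. Second, and fatally, the central structural claim --- that the triangles $\{T_Y : |Y|=j,\ Y \text{ captured}\}$ triangulate a region $R_j$ with vertices in $X$ --- is impossible for cardinality reasons: a crossing-free triangulation whose vertices lie among the $n$ points of $X$ has at most $2n-2-a_1$ triangles, whereas the quantity to be counted is $(2j-1)n - j^2 + 1 - \sum_{i<j} a_i$, which already for $j=3$ is about $5n$ in the tight (nice, generic) case that your ``moreover'' part requires. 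Hence the level-$j$ triangles necessarily overlap, no gap-free planar subdivision of $\operatorname{conv}(X)$ can encode them, and the level-by-level Euler count cannot produce the stated formula. (This is precisely why the independent proof of Chevallier et al.\ cited in the paper passes to order-$k$ Voronoi diagrams and \emph{centroid} triangulations rather than triangulations on $X$ itself.) A further issue is that even a Type~I hyperedge does not correspond to a single triangle: it may have many representative ranges with three boundary points, and controlling that multiplicity is an essential ingredient of any correct count.

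The paper's proof avoids planar subdivisions altogether. It counts the Type~I representative ranges exactly at each level by a sweep with the ``next range'' operator, charging each such range to a ``second point'' $p\in X$ and a left-count $a$, which gives $|\cR^k_1| = 2(k-2)|X| - \sum_{i=2}^{k-1}|\cA_i| - (k-1)(k-2)$; it shows that the representative ranges of a fixed Type~I hyperedge form a maximal outerplanar graph, which controls the multiplicity; and it proves the double-counting identity $3|\cR^k_1| + 2|\cR^k_2| = 3|\cR^{k+1}_1| + |\cA_k| + 2\sum_{Y \in \cE^k_1}(|\cR(Y)|-1)$ linking levels $k$ and $k+1$. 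Combining these with $a_i = |\cA_{i+1}|$ yields the exact formula. To salvage your outline you would essentially have to rebuild the higher-order Delaunay/Voronoi machinery, which is a far longer route than the paper's self-contained argument.
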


Here a set $S$ is called \emph{nice} if $S$ has ``no corners'' and ``no straight segments on its boundary''.
We define such nice shapes formally later.
Moreover, $X$ is in \emph{general position with respect to $S$} if no three points of $X$ are collinear and no four points of $X$ lie on the boundary of any $S$-range.

Note that for $k=2$ the bound in Theorem~\ref{thm:main} amounts for at most $3|X|-3-t$ edges, where $t = a_1$ is the number of corners of the convex hull of $X$.
We obtain the following refinement of Theorem~\ref{thm:topological-density}.

\begin{corollary}
 Let $S$ be a convex compact set and $k,n$ be positive integers.
 Any $S$-capturing hypergraph on $n$ vertices has at most $k^2n + O(k^3)$ hyperedges of size $k$ or less.
\end{corollary}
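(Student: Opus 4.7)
The plan is to derive the corollary by summing the bound of Theorem~\ref{thm:main} over all admissible hyperedge sizes $j \in \{1, 2, \ldots, k\}$. The starting observation is that any hyperedge $Y \subseteq X$ of size $j$ in an $S$-capturing hypergraph is, by the very definition of an $S$-capturing hypergraph, captured by some $S$-range, and hence is a hyperedge of the $j$-uniform hypergraph $\cH(X,S,j)$. Consequently, the number of hyperedges of size at most $k$ in any $S$-capturing hypergraph on $X$ is bounded above by $\sum_{j=1}^{k} |\cE(\cH(X,S,j))|$.

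Next I would apply Theorem~\ref{thm:main} with parameter $j$ in place of $k$, giving
$$|\cE(\cH(X,S,j))| \;\le\; (2j-1)n - j^2 + 1 - \sum_{i=1}^{j-1} a_i$$
for every $j \in \{1,\ldots,k\}$. Summing over $j$, the term linear in $n$ contributes $n\sum_{j=1}^{k}(2j-1) = k^2 n$, which is exactly the leading term the corollary promises. The remaining contributions amount to
$$\sum_{j=1}^{k}(1 - j^2) \;-\; \sum_{j=1}^{k}\sum_{i=1}^{j-1} a_i \;=\; k - \tfrac{1}{6}k(k+1)(2k+1) - \sum_{i=1}^{k-1}(k-i)\, a_i.$$
Since the quantities $a_i$ are non-negative, the final sum only lowers the upper bound and may simply be discarded, leaving an error term of absolute value $O(k^3)$. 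Combining gives the desired $k^2 n + O(k^3)$ bound.

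I do not foresee any real obstacle: the entire argument is a one-line summation on top of Theorem~\ref{thm:main}. The only point of care is the sign bookkeeping that lets one drop the $-\sum(k-i)a_i$ term when proving an upper bound, and the elementary arithmetic $k - \tfrac{1}{6}k(k+1)(2k+1) = O(k^3)$ that absorbs the remaining lower-order terms.
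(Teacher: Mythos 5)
Your proof is correct and follows essentially the same route as the paper, which obtains the corollary by summing the bound of Theorem~\ref{thm:main} over all hyperedge sizes $j=1,\ldots,k$ (the paper's concluding remarks make this explicit, noting the even sharper bound $k^2|X|$). Indeed, since your leftover term $k-\tfrac16 k(k+1)(2k+1)-\sum_{i=1}^{k-1}(k-i)a_i$ is nonpositive, your computation actually yields that sharper bound $k^2 n$ as well.
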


The paper is organized as follows.
Section~\ref{sec:notation} provides general definitions.
Here we also show how to reduce the general case of an arbitrary capturing hypergraph to one with a nice shape $S$ and a point set $X$ in general position with respect to $S$.
Section~\ref{sec:types} introduces different types of ranges.
The number of ranges of Type~I is determined exactly in Section~\ref{sec:type-I}.
Section~\ref{sec:exact-bound} gives an identity involving the number of ranges of both types in $X$.
Finally, Theorem~\ref{thm:main} is proven in Section~\ref{sec:upper-bound}.

\section{Nice shapes, general position and next range}\label{sec:notation}

In this section we introduce nice shapes, the concepts of the next range and state their basic properties.
For the ease of reading, the proofs of some results in this section are provided in the appendix because they are quite straightforward but also technical.
We denote the boundary of a set $S$ by $\partial S$.
We denote the line through distinct points $p$ and $q$ by $\overline{pq}$.
A halfplane defined by a line $\ell$ is a connected component of the plane after the removal of $\ell$.
In particular, such halfplanes are open sets.
A closed halfplane is the closure of a halfplane, i.e., the union of the halfplane and its defining line.
Typically we denote the two halfplanes defined by a line by $L$ and $R$ (standing for ``left'' and ``right'').
 
For a set $X$ of $n$ points in the plane and any $i \in [n]$ an \emph{$i$-set} of $X$ is a subset $Y$ of $X$ on $i$ elements that can be separated with a straight line.
In other words, $Y$ is an $i$-set if it is captured by a closed halfplane.
The number of $i$-sets of $X$ is denoted by $a_i$.
Note that some (but in general not all) $i$-sets can be captured by a halfplane that has two points of $X$ on its boundary.
Such halfplanes are called \emph{representative halfplanes} and we denote the set of all representative halfplanes of $i$-sets of $X$ by $\cA_i$.
Note that (even if $i \geq 2$) the number of representative halfplanes for a fixed $i$-set might be anything, including $0$.
 
\begin{lemma}\label{lem:halfplanes}
 For any set $X$ of $n$ points in the plane, no three on a line, and any $i \in \{1,\ldots,n-1\}$ we have $a_i = |\cA_{i+1}|$.
\end{lemma}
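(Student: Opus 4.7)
The plan is to exhibit a $2$-to-$2$ correspondence, mediated by inner common tangents of convex hulls, between $i$-sets of $X$ and representative halfplanes in $\cA_{i+1}$. For any $i$-set $Y$ with $1\le i\le n-1$, both $Y$ and $X\setminus Y$ are nonempty, and $C:=\mathrm{conv}(Y)$ and $C':=\mathrm{conv}(X\setminus Y)$ are disjoint compact convex polygons (since $Y$ is separable from $X\setminus Y$ by a line). From classical convex geometry there are exactly two \emph{inner common tangents} of $C$ and $C'$, namely lines tangent to both hulls with the two hulls lying in opposite closed halfplanes. The hypothesis that no three points of $X$ are collinear rules out tangency along an edge of either hull, since any edge-tangent line would contain at least three collinear points of $X$. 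Consequently, each inner common tangent is the line $\overline{pq}$ for a unique vertex $p$ of $C$ (so $p\in Y$) and a unique vertex $q$ of $C'$ (so $q\in X\setminus Y$), and $\overline{pq}\cap X=\{p,q\}$.

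For each such tangent $\overline{pq}$, let $\bar H$ be the closed halfplane bounded by $\overline{pq}$ that contains $C$. Since $C'$ lies strictly on the opposite side, $X\cap\bar H=Y\cup\{q\}$, and $\bar H\in\cA_{i+1}$ is a representative halfplane of the $(i+1)$-set $Y\cup\{q\}$ with boundary points $p,q$. This assigns $(Y,\overline{pq})\mapsto(\bar H,p)$. For the inverse, take $(\bar H,r)$ with $\bar H\in\cA_{i+1}$ capturing an $(i+1)$-set $Z$ and $\partial\bar H\cap X=\{r,r'\}$, and set $Y:=Z\setminus\{r'\}$. A small rotation of $\partial\bar H$ about $r$ that moves $r'$ strictly out of $\bar H$ (which exists by general position, since $r'$ is the only other point of $X$ on $\partial\bar H$) yields a closed halfplane capturing $Y$; hence $Y$ is an $i$-set. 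Moreover $\partial\bar H$ separates $Y\setminus\{r\}$ (strictly inside $\bar H$) from $X\setminus Z$ (strictly outside), touching $\mathrm{conv}(Y)$ only at $r\in Y$ and $\mathrm{conv}(X\setminus Y)$ only at $r'\in X\setminus Y$, so $\partial\bar H$ is an inner common tangent of $\mathrm{conv}(Y)$ and $\mathrm{conv}(X\setminus Y)$. The two maps are mutually inverse, yielding a bijection.

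Counting both sides, the domain has cardinality $2a_i$ (every $i$-set has exactly two inner common tangents) and the codomain has cardinality $2|\cA_{i+1}|$ (every representative halfplane has exactly two $X$-points on its boundary line), whence $a_i=|\cA_{i+1}|$. The step I expect to be the main obstacle is a careful verification that, in the general-position setting, every disjoint pair of convex hulls arising from an $i$-set admits exactly two inner common tangents, each passing through precisely one point of $Y$ and one of $X\setminus Y$; this relies both on the general-position assumption (to preclude a tangent containing three points of $X$) and on a brief appeal to classical facts about common supporting lines of disjoint convex sets.
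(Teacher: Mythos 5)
Your argument rests on the same geometric engine as the paper's proof---a separating line rotated into a common supporting line through one point of $Y$ and one point of $X\setminus Y$---but the bookkeeping is genuinely different. The paper fixes, for each $i$-set $Y$, one canonical such line (among all closed halfplanes containing $Y$ with no point of $X-Y$ in the interior, the defining line of smallest slope after normalizing the direction of separation), which gives a direct bijection between $i$-sets and $\cA_{i+1}$; the orientation of the boundary pair then tells which of the two boundary points to delete for the inverse. You instead use \emph{both} inner common tangents and \emph{both} boundary points of a representative halfplane, obtaining a $2$-to-$2$ correspondence and dividing by two. Your version buys symmetry (no arbitrary slope/orientation convention), at the cost of needing the fact that the number of inner common tangents is \emph{exactly} two---classical for disjoint full-dimensional convex bodies, but in need of verification exactly in the degenerate situations the lemma allows, namely hulls that are points or segments.

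Two such degenerate situations need explicit treatment in your write-up. First, when $i=1$ the hull $C=\mathrm{conv}(Y)$ is a single point lying \emph{on} the tangent, so ``the closed halfplane bounded by $\overline{pq}$ that contains $C$'' does not determine $\bar H$; the correct rule is to take the closed halfplane containing $Y$ whose interior contains no point of $X\setminus Y$ (and, symmetrically for $i=n-1$, the phrase ``$C'$ lies strictly on the opposite side'' must be weakened to ``strictly except at $q$''). Second, for $n=2$ both hulls are single points and there is only \emph{one} inner common tangent, so the count $2a_1$ fails as stated and the map itself is ill-defined; this case has to be disposed of directly ($a_1=|\cA_2|=2$). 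For $n\geq 3$ the no-three-collinear hypothesis does guarantee exactly two inner common tangents, each meeting $X$ in exactly one point of $Y$ and one of $X\setminus Y$, but since you yourself flag this as the main obstacle, the short case analysis (point versus segment, point versus polygon, segment versus segment, etc.) should be carried out rather than left as an appeal to the full-dimensional classical statement. The paper's canonical-choice bijection avoids all of these case distinctions, which is why it can afford to be terser.
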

\begin{proof}
 Let $X$ be a finite point set with no three points in $X$ on a line, $|X| = n$ and $i \in \{1,\ldots,n-1\}$.
 We shall give a bijection between the set $X_i$ of $i$-sets of $X$ and $\cA_{i+1}$.
 
 Let $Y \in X_i$ be an $i$-set.
 Assume (by rotating the plane if needed) that $Y$ is separated from $X - Y$ by a vertical line, such that $Y$ is contained in the corresponding right halfplane.
 Consider all closed halfplanes that contain all points in $Y$ and whose interior does not contain any point in $X - Y$.
 Among all lines defining such halfplane let $\ell$ be one with smallest slope.
 Then $\ell \cap X$ contains a point $p \in X - Y$ and a point $q \in Y$ and going from $p$ to $q$ along $\ell$ we have the corresponding halfplane that contains $X$ on the right.
 In particular this right halfplane of $\ell$ is in $\cA_{i+1}$.
 
 On the other hand, for any closed halfplane $H \in \cA_{i+1}$ consider the line $\ell$ defining $H$ and the two points $p,q \in X \cap \ell$ so that going from $p$ to $q$ along $\ell$ we have $H$ on the right.
 Then rotating $\ell$ slightly counterclockwise around any point on $\ell$ between $p$ and $q$ shows that $(H \cap X) - p$ is an $i$-set of $X$.
 
 The above bijection shows that $a_i = |X_i| = |\cA_{i+1}|$, as desired.
\end{proof}

\subsection{Nice shapes and general position of a point set.}
A convex compact set $S$ is called a {\bf nice shape} if
\begin{enumerate}[label = (\roman*)]
 \item for each point in $\partial S$ there is exactly one line that intersects $S$ only in this point and
 \item the boundary of $S$ contains no non-trivial straight line segment.
\end{enumerate}

For example, a disc is a nice shape, but a rectangle is not.
A nice shape has no ``corners'' and we depict nice shapes as discs in most of the illustrations.

\begin{lemma}\label{lem:small-facts}
 If $S$ is a nice shape, $S_1$ and $S_2$ are distinct $S$-ranges, then each of the following holds.
 \begin{enumerate}[label = (\roman*)]
  \item  $\partial S_1 \cap \partial S_2$ is a set of at most two points.\label{enum:pseudodisc}
  
  \item If $\partial S_1 \cap \partial S_2 = \{p,q\}$ and $L$ and $R$ are the two open halfplanes defined by $\overline{pq}$, then \label{enum:subset-halfplane}
   \begin{itemize}
    \item $S_1 \cap L \subset S_2 \cap L$ and $S_1 \cap R \supset S_2 \cap R$ or
    \item $S_1 \cap L \supset S_2 \cap L$ and $S_1 \cap R \subset S_2 \cap R$.
   \end{itemize}
  
  \item Any three non-collinear points lie on the boundary of a unique $S$-range.\label{enum:unique-range}
    
  \item For a subset of points $X \subset \Real^2$ and any $Y \subset X$, $|Y| \geq 2$, that is captured by some $S$-range there exists at least one $S$-range $S'$ with $Y = X \cap S'$ and $|\partial S' \cap X| \geq 2$.\label{enum:two-on-boundary}
 \end{enumerate}
\end{lemma}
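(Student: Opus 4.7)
The plan is to dispatch the four parts in order, using the pseudodisc property of homothets of a strictly convex smooth body for (i) and (ii), a continuity argument for (iii), and a shrinking argument for (iv).

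For (i), I write $S_i = \lambda_i S + v_i$ and suppose three distinct points $p,q,r \in \partial S_1 \cap \partial S_2$. Niceness of $S$ implies that at any common boundary point the unique supporting lines of $S_1$ and $S_2$ agree (as each is a scaled, translated copy of the unique supporting line of $S$ at the corresponding boundary point), so a short analysis of the cyclic order of $p,q,r$ along $\partial S_1$ forces $\lambda_1=\lambda_2$ and $v_1=v_2$, contradicting $S_1\neq S_2$. For (ii), with $\partial S_1\cap\partial S_2=\{p,q\}$, strict convexity gives $S_i\cap\overline{pq}=[p,q]$, and $\{p,q\}$ splits each $\partial S_i$ into two arcs, one in each open halfplane $L$ and $R$. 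In $L$ the two arcs are disjoint except at $p,q$ (by (i)), so they are nested as Jordan curves and yield $S_1\cap L\subseteq S_2\cap L$ or the reverse. If the inclusions on $L$ and $R$ went the same way, then $S_1\subseteq S_2$ (or vice versa) with $\{p,q\}$ forming two tangential touch points of a strictly smaller homothet inside a larger one, which strict convexity forbids; hence the inclusions flip.

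For (iii), uniqueness is immediate from (i). For existence, I fix two of the three points $p,q$; the $S$-ranges with $\{p,q\}\subseteq\partial S'$ form a one-parameter family, continuous in the scale $\lambda\geq |pq|/\operatorname{diam}(S)$, and on the side of $\overline{pq}$ containing the third point $r$ the corresponding boundary arc varies continuously from a short arc at the minimum scale to an arbitrarily large arc as $\lambda\to\infty$. An intermediate-value argument then places $r$ on $\partial S'$ for some $\lambda^*$. For (iv), I use a two-step shrink: first, pick $c\in S_0\setminus X$ and contract $S_0$ homothetically toward $c$ via $S_t = (1-t)(S_0-c)+c$, which stays inside $S_0$ (so no $X\setminus Y$ point enters); since $|Y|\geq 2$ and $S_t\to\{c\}$, at the first time $t^*$ some $p_1\in Y$ joins $\partial S_{t^*}$, producing $S_1:=S_{t^*}$ with $p_1\in\partial S_1\cap Y$ and $X\cap S_1=Y$. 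Second, among the $S$-ranges $S'$ with $p_1\in\partial S'$, $Y\subseteq S'$, $(X\setminus Y)\cap S'=\emptyset$, and $\lambda(S')\leq\lambda(S_1)$, I minimize $\lambda$; by compactness (with a perturbation to escape limit points where some $x\in X\setminus Y$ reaches $\partial S'$), the infimum is attained, and at the minimizer a second $p_2\in Y$ must lie on $\partial S'$, for otherwise a small shrink along the one-parameter family with $p_1$ fixed on the boundary would contradict minimality.

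The main obstacle is part (iv): the condition $(X\setminus Y)\cap S'=\emptyset$ is only open, so a minimizing sequence may converge to a limit touching some $x\in X\setminus Y$, and one must perturb the limit back into the admissible set without increasing $\lambda$ — a step that uses niceness essentially via the unique tangent directions at $p_1$ and $x$. Parts (i)--(iii) are by contrast standard pseudodisc arguments in the nice-shape setting.
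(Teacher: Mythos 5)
The decisive gap is in part (i). Your argument rests on the claim that at a common boundary point the supporting lines of $S_1$ and $S_2$ coincide. That is false: two distinct homothets of a nice shape typically cross \emph{transversally} at the points of $\partial S_1\cap\partial S_2$ (two overlapping congruent discs already show this — at each of their two intersection points the tangent lines differ). Your parenthetical justification conflates the two homothety preimages of the common point: the supporting line of $S_i$ at $p$ is the image of the supporting line of $S$ at $f_i(p)$, but $f_1(p)$ and $f_2(p)$ are in general \emph{different} points of $\partial S$, so nothing forces the two lines at $p$ to agree. (Agreement does hold when one range is contained in the other and they touch, which is exactly where the paper uses it — in its proof of (ii), not (i).) The remaining step, ``a short analysis of the cyclic order of $p,q,r$ forces $\lambda_1=\lambda_2$, $v_1=v_2$,'' is a placeholder rather than an argument; the paper instead pulls $p,q,r$ back to $\partial S$ via the two homotheties, getting two similar triangles inscribed in $\partial S$, and derives a contradiction from convexity together with the no-straight-segment condition. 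Since your (ii) (nesting of the arcs) and (iii) (uniqueness) cite (i), the whole chain is affected. In (ii), moreover, the assertion that a strictly smaller homothet inside a larger one cannot touch it at two boundary points is precisely the content that requires proof — the paper proves it via the equal touching lines at $p$ and $q$ and the fact that a homothety fixing two points is the identity; ``strict convexity forbids'' is not a justification.

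Parts (iii) and (iv) also leave real work undone. In (iii), parametrizing $\cS_{pq}$ by the scale $\lambda$ is not faithful (for each $\lambda$ above the minimum there are two ranges in $\cS_{pq}$, one bulging into each halfplane), and ``a short arc at the minimum scale'' does not show that $r$ lies outside some member of the family — the smallest range through $p,q$ may well contain $r$. Before any intermediate-value argument you must exhibit a member of $\cS_{pq}$ that avoids $r$; the paper constructs one explicitly using the touching line of $S$ parallel to a line through $q$ that separates $p$ from $r$. In (iv), the step you yourself flag as the main obstacle — attaining the minimum even though the constraint $(X\setminus Y)\cap S'=\emptyset$ is open, and perturbing a limit range that has picked up a point of $X\setminus Y$ on its boundary without increasing the scale — is the entire difficulty, and it is not carried out. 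The paper sidesteps it: after shrinking to obtain $p\in\partial S_2$ with $X\cap S_2=Y$, it takes a tiny range $S_3$ through $p$ containing no other point of $X$, lets $q$ be the second point of $\partial S_2\cap\partial S_3$, and deforms $S_2$ into $S_3$ within $\cS_{pq}$; every intermediate range is contained in $S_2\cup S_3$, so no point of $X\setminus Y$ can ever enter, and the last intermediate range still containing all of $Y$ has a second point of $Y$ on its boundary.
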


The proof of Lemma~\ref{lem:small-facts} is provided in the Appendix.
We remark that only the last item of Lemma~\ref{lem:small-facts} remains true if $S$ is convex compact but not nice.
For example, if $S$ is an axis-aligned square, then no three points with strictly monotone $x$- and $y$-coordinates lie on the boundary of any $S$-range, whereas three points, two of which have the same $x$- or $y$-coordinate lie on the boundary of infinitely many $S$-ranges.

\medskip

\noindent
For sets $X,S \subset \Real^2$ we say that {\bf $X$ is in general position with respect to $S$} if 
\begin{enumerate}[label = (\roman*)]
 \item no two points of $X$ are on a vertical line,
 \item no three points of $X$ are collinear,
 \item no four points of $X$ lie on the boundary of any $S$-range. 
\end{enumerate}

\begin{lemma}\label{lem:general-position} 
 For any point set $X$, positive integer $k$ and a convex compact set $S$, there is a nice shape $S'$ and a point set $X'$ in general position with respect to $S'$, such that $|X'|=|X|$ and the number of edges in $\cH(X',S',k)$ is at least as large as the number of edges in $\cH(X,S,k)$.
\end{lemma}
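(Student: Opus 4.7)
The plan is to simultaneously perturb $S$ to a nearby nice shape $S'$ and $X$ to a nearby set $X'$ in general position with respect to $S'$, controlling both perturbations so that every hyperedge of $\cH(X,S,k)$ survives as a hyperedge of $\cH(X',S',k)$. The key enabling fact is that $\cH(X,S,k)$ has only finitely many edges, which provides a uniform amount of slack.

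First, for each hyperedge $Y\in\cH(X,S,k)$ I would fix an $S$-range $R_Y$ capturing $Y$, and dilate $R_Y$ about an interior point by a factor $1+\eta$ to obtain an $S$-range $R_Y^*$ that still captures $Y$ but now has every point of $Y$ in its interior. Taking $\eta$ uniformly small across the finitely many hyperedges produces a single constant $\delta>0$ such that every $y\in Y$ lies at distance at least $\delta$ from $\partial R_Y^*$, while every $x\in X\setminus Y$ lies at distance at least $\delta$ from $R_Y^*$.

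Next, I would approximate $S$ by a nice shape $S'$ in Hausdorff distance. Writing $R_Y=\lambda_Y S+v_Y$ and $\Lambda=\max_Y\lambda_Y$, I want $d_H(S,S')<\delta/(3\Lambda)$, which forces $R'_Y:=\lambda_Y S'+v_Y$ to satisfy $d_H(R_Y^*,R'_Y)<\delta/3$ for every $Y$. Such an $S'$ exists by standard convex-geometric arguments: one can first round off corners by intersecting $S$ with all closed discs of some sufficiently large radius containing $S$, and then destroy any remaining flat boundary segments by convolving the support function with a small smooth bump. I would then perturb $X$ to $X'$ by moving each point by less than $\delta/3$, chosen generically so that $X'$ is in general position with respect to $S'$. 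This is possible because each defining condition fails only on a nowhere-dense subset of configurations; for the last one, the uniqueness statement of Lemma~\ref{lem:small-facts} implies that four points lying on a common $S'$-range boundary satisfy a codimension-one constraint in $(\Real^2)^{|X|}$.

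Finally, for each hyperedge $Y$ and its image $Y':=\{x':x\in Y\}\subseteq X'$, I verify that $X'\cap R'_Y=Y'$. If $x\in Y$, then $B(x,\delta)\subseteq R_Y^*$; a supporting-hyperplane argument using convexity of $R'_Y$ and $d_H(R_Y^*,R'_Y)<\delta/3$ shows that $x$ in fact lies at depth at least $\delta/3$ inside $R'_Y$, hence $x'\in R'_Y$. If $x\in X\setminus Y$, the margin $\delta$ comfortably absorbs the $\delta/3$ displacements of both $R_Y^*$ and $x$, so $x'\notin R'_Y$. Since $Y\mapsto Y'$ is injective, $\cH(X',S',k)$ has at least as many hyperedges as $\cH(X,S,k)$. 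The main obstacle is the controlled construction of the nice approximation $S'$: removing corners and destroying flat boundary segments are two distinct smoothing operations that must both be carried out while maintaining a tight Hausdorff bound; once a concrete $S'$ is produced, the rest of the argument is a routine stability-under-perturbation computation.
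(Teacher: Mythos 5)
Your overall strategy is the same as the paper's: use the finiteness of the hyperedge set to create a uniform margin around one chosen capturing range per hyperedge, replace $S$ by a nearby nice shape $S'$, and then perturb the points generically into general position. The margin bookkeeping and the codimension-one argument for the ``four points on a common $S'$-range boundary'' condition are sound and parallel the paper's appendix proof (which instead sandwiches a nice shape between a capturing range and a slightly dilated copy with minimal stretching factor, and moves points one at a time avoiding at most $n+\binom{n}{2}+\binom{n}{3}$ bad curves). The genuine gap is in the step you yourself single out as the main obstacle: your construction of $S'$ does not produce a nice shape. Intersecting $S$ with all closed discs of a large radius $R$ containing $S$ removes straight boundary segments but does \emph{not} remove corners: for the unit square, every such disc contains the four corners, and for each corner there are radius-$R$ discs containing the square and touching it exactly at that corner, so the corner lies on the boundary of the intersection and still admits more than one supporting line (the intersection is bounded by four circular arcs meeting at the original corners). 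Convolving the support function with a small bump does not help either: it smooths the atoms of the surface-area measure (flat edges), but a corner corresponds to an open arc of normal directions on which that measure vanishes, and after convolution with a narrowly supported kernel it still vanishes on a slightly shorter arc, so the corner survives (for the square, one literally recovers a body with four corners and four smooth arcs). Hence for any polygon your $S'$ violates condition (i) of niceness. The fix is standard: additionally take a Minkowski sum with a small disc, i.e.\ add a small positive constant to the convolved support function, which makes the surface-area measure a strictly positive continuous density and hence the body smooth and strictly convex; or simply invoke the classical fact that smooth strictly convex bodies are dense in the Hausdorff metric.

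A second, smaller slip: you measure the inner margin $\delta$ relative to the dilated range $R_Y^*$, but then define $R'_Y=\lambda_Y S'+v_Y$ from the homothety parameters of $R_Y$, and the claimed bound $d_H(R_Y^*,R'_Y)<\delta/3$ does not follow from $d_H(S,S')<\delta/(3\Lambda)$; that hypothesis only bounds $d_H(R_Y,R'_Y)$, while $R_Y^*$ may be much farther than $\delta/3$ from $R_Y$. You should instead let $R'_Y$ be the $S'$-homothet with the same scaling factor and translation as $R_Y^*$ and require $d_H(S,S')<\delta/\bigl(3(1+\eta)\Lambda\bigr)$; with that correction the deep-point argument (separating line plus convexity of $R'_Y$) and the far-point argument go through exactly as you sketch, and the injection $Y\mapsto Y'$ gives the desired inequality on the number of hyperedges.
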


To prove Lemma~\ref{lem:general-position}, we show that one can move the points of $X$ slightly and modify $S$ slightly to obtain the desired property.
See the Appendix for a detailed account of the argument.
From now on we will always assume that $S$ is a nice shape and $X$ is a finite point set in general position with respect to $S$.

\subsection{Next Range.}
For two distinct points $p,q$ in the plane, we define $\cS_{pq}$ to be the set of all $S$-ranges $S_1$ with $p,q \in \partial S_1$.
The symmetric difference of two sets $A$ and $B$ is given by $A \triangle B = (A \setminus B) \cup (B \setminus A)$.

\begin{lemma}\label{lem:next-S-range}
 Let $p,q$ be two points such that no four points in $X \cup \{p,q\}$ lie on the boundary of an $S$-range.
 Let $L$ be a halfplane defined by $\overline{pq}$.
 Then the following holds.
 \begin{enumerate}[label = (\roman*)]
  \item The $S$-ranges in $\cS_{pq}$ are linearly ordered, denoted by $\prec_{pq}$, by inclusion of their intersection with $L$:
  \[
   S_1 \prec_{pq} S_2 \quad \Leftrightarrow \quad S_1 \cap L \subset S_2 \cap L \qquad \text{for all } S_1,S_2 \in \cS_{pq}
  \]
  \item For each $S_1 \in \cS_{pq}$ there exists a $\prec_{pq}$-minimal $S_2 \in \cS_{pq}$ with
  \[
   (\partial S_2 \setminus \partial S_1) \cap X \neq \emptyset \qquad \text{and} \qquad S_1 \prec_{pq} S_2
  \]
  if and only if $S_1 \triangle L$ contains a point from $X$ in its interior.\label{enum:next-exists}
 \end{enumerate}
\end{lemma}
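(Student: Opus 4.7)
Part (i) will follow quickly from Lemma~\ref{lem:small-facts}. For any two distinct $S_1, S_2 \in \cS_{pq}$, both boundaries contain $\{p, q\}$, so \ref{enum:pseudodisc} gives $\partial S_1 \cap \partial S_2 = \{p, q\}$, and then \ref{enum:subset-halfplane} forces exactly one of $S_1 \cap L \subsetneq S_2 \cap L$ or $S_2 \cap L \subsetneq S_1 \cap L$. Transitivity is immediate, and antisymmetry holds because $S_1 \cap L = S_2 \cap L$ would make the boundary arcs of $S_1$ and $S_2$ in $L$ coincide, violating \ref{enum:pseudodisc}.

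The plan for part (ii) is to set up a continuous, order-preserving parametrization of $\cS_{pq}$ and then read off the equivalence. Let $\ell$ be the perpendicular bisector of $pq$ and $m := \ell \cap \overline{pq}$. I would parametrize $\cS_{pq}$ by $z \in \ell \cap L$: Lemma~\ref{lem:small-facts}\ref{enum:unique-range} supplies a unique $S$-range $S(z) \in \cS_{pq}$ with $z \in \partial S(z)$, and conversely every element of $\cS_{pq}$ meets $\ell \cap L$ in exactly one boundary point. Using convexity plus $m \in \mathrm{int}(S(z))$, one checks that $S(z) \prec_{pq} S(z')$ whenever $|z - m| < |z' - m|$. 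Niceness of $S$ will further give continuity of the parametrization together with the limiting behaviour $\bigcup_z (S(z) \cap L) = L$ and $\bigcap_z (S(z) \cap R) \subseteq \overline{pq}$ as $|z - m| \to \infty$.

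With this set-up the two directions of (ii) become short. For the forward direction, pick $y \in (\partial S_2 \setminus \partial S_1) \cap X$; general position of $X \cup \{p, q\}$ rules out $y \in \overline{pq}$. If $y \in L$, then $y \in \mathrm{int}(S_1) \cap L$ is impossible because a small ball around $y$ would lie in $S_1 \cap L \subseteq S_2 \cap L \subseteq S_2$, contradicting $y \in \partial S_2$; hence $y \in L \setminus \overline{S_1}$. Symmetrically, $y \in R$ forces $y \in \mathrm{int}(S_1) \cap R$. In either case $y \in \mathrm{int}(S_1 \triangle L)$. For the reverse direction, take $x \in X \cap \mathrm{int}(S_1 \triangle L)$; monotonicity together with the limit behaviour yields a smallest $S(z(x)) \succ_{pq} S_1$ at which $x \in \partial S(z(x))$, depending on whether $x$ enters $S(z) \cap L$ from outside or exits $S(z) \cap R$ from inside. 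Taking the $\prec_{pq}$-smallest $S(z(x))$ over the finitely many applicable $x$ then gives the desired $\prec_{pq}$-minimal $S_2$.

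The main obstacle will be the careful verification of the three properties of the parametrization---continuity, strict monotonicity, and limiting behaviour---each of which genuinely uses niceness of $S$: corners on $\partial S$ would spoil uniqueness of $S(z)$, and straight boundary segments would create flat intervals in $\prec_{pq}$. The hypothesis that no four points of $X \cup \{p, q\}$ lie on the boundary of a common $S$-range then guarantees that the argmin in the final step is attained by a single $x$, so $S_2$ is well-defined.
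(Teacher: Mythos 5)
Your part~(i) coincides with the paper's: since both boundaries contain $p$ and $q$, Lemma~\ref{lem:small-facts}~\ref{enum:pseudodisc} and~\ref{enum:subset-halfplane} give the strict dichotomy and hence a linear order (the paper just cites~\ref{enum:subset-halfplane}). Your forward direction of~(ii) also matches the paper's argument: the witness $y\in(\partial S_2\setminus\partial S_1)\cap X$ is forced into the interior of $S_1\triangle L$ by the inclusion dichotomy. Where you genuinely diverge is the reverse direction. The paper applies Lemma~\ref{lem:small-facts}~\ref{enum:unique-range} directly: for each $r\in(S_1\triangle L)\cap X$ it takes the unique range $S_r\in\cS_{pq}$ with $r\in\partial S_r$, notes by~(i) that these finitely many ranges are linearly ordered, takes the $\prec_{pq}$-minimum, and verifies $S_1\prec_{pq}S_r$ by the same two-case analysis you use in your forward direction. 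You instead parametrize $\cS_{pq}$ by the perpendicular bisector of $pq$ and extract a ``transition'' range via continuity, strict monotonicity and limiting behaviour of $z\mapsto S(z)$. This can be made to work, but the three properties you defer as the main obstacle are precisely the part needing proof, and their natural proofs (that every $w\in L$ lies in some $S(z)$, that a point of $R$ eventually leaves $S(z)$) go through the very same unique range through $p$, $q$ and the given point; indeed, by Lemma~\ref{lem:small-facts}~\ref{enum:unique-range} your ``smallest $S(z(x))$ with $x\in\partial S(z(x))$'' is simply the unique member of $\cS_{pq}$ whose boundary contains $x$. So the detour buys nothing over the paper's two-line construction and imports analytic overhead (a topology on $\cS_{pq}$, existence of the transition parameter) that the paper avoids; both arguments share the implicit step that every candidate $S$ with $S_1\prec_{pq}S$ and $(\partial S\setminus\partial S_1)\cap X\neq\emptyset$ is of this form $S_x$, which is exactly the forward-direction analysis.

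Two small slips, neither fatal. First, ruling out $y\in\overline{pq}$ does not follow from the four-point hypothesis (that would only be three points of $X\cup\{p,q\}$ on $\partial S_2$); it follows from niceness, since $p,y,q$ collinear on $\partial S_2$ would force a straight segment on the boundary. Second, the four-point hypothesis is not what makes your final minimum well defined: the candidates are finitely many and linearly ordered by~(i), so a $\prec_{pq}$-minimal element exists regardless of whether two points of $X$ could lie on the boundary of the same range.
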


The proof of Lemma~\ref{lem:next-S-range} is provided in the Appendix.

\medskip

\noindent
Whenever no four points in $X \cup \{p,q\}$ lie on the boundary of an $S$-range, $L$ is a halfplane defined by $\overline{pq}$ and $S_1 \in \cS_{pq}$, we define $\n_L(S_1)$, called the {\bf next range} of $S_1$ in $L$, as follows.
\begin{itemize}
 \item If the interior of $S_1 \triangle L$ contains a point from $X$, then $\n_L(S_1) = S_2$ for the $S$-range $S_2 \in \cS_{pq}$ in Lemma~\ref{lem:next-S-range}~\ref{enum:next-exists}.
 \item If the interior of $S_1 \triangle L$ contains no point from $X$, then $\n_L(S_1) = L$.
\end{itemize}
Informally, we can imagine continuously transforming $S_1$ into a new $S$-range containing $p$ and $q$ on its boundary and containing $S_1\cap L$ and all points of $(S_1 \setminus \partial S_1) \cap X$.
As soon as this new $S$-range contains a point from $X \setminus \{p,q\}$ on its boundary, we choose it as the next range of $S_1$ in $L$.
Note that if $S_2 = \n_{L}(S_1)$ and $\partial S_1$ contains a point from $X \setminus \{p,q\}$, then $\n_{R}(S_2) = S_1$, where $R$ denotes the other halfplane defined by $\overline{pq}$.

As no four points in $X \cup \{p,q\}$ lie on the boundary of an $S$-range, we have $|\partial S_1 \cap (X \setminus \{p,q\})| \leq 1$ for each $S_1 \in \cS_{pq}$.
This implies that if $S_1$ captures $k$ elements of $X$ then $\n_{L}(S_1)$ captures $k-1$, $k$ or $k+1$ elements of $X$.
Indeed, if $\n_L(S_1) = L$, then $|X \cap L| = k$ and if $\n_L(S_1) \neq L$, then the following holds.
\begin{align}
 \text{If } R \cap \partial S_1 \cap X \neq \emptyset, \text{ then } \n_{L}(S_1) \text{ captures $k$ or $k-1$ points.} \label{eq:capture-minus-1}\\
 \text{If } R \cap \partial S_1 \cap X = \emptyset, \text{ then } \n_{L}(S_1) \text{ captures $k$ or $k+1$ points.} \label{eq:capture-plus-1}
\end{align}
See Figure~\ref{fig:next-range} for the three possible case scenarios.

\begin{figure}[htb]
 \centering
 \includegraphics{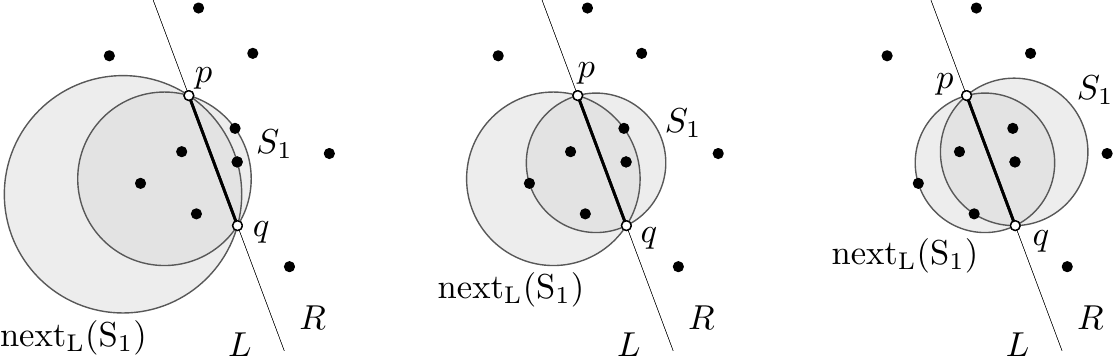}
 \caption{Three cases of an $S$-range $S_1$ with two boundary points $p$ and $q$, and the next $S$-range of $S_1$ in a halfplane $L$ defined by $\overline{pq}$. Note that $|X \cap \n_{L}(S_1)| - |X \cap S_1|$ is $-1$ on the left, $0$ in the middle, and $1$ on the right.}
 \label{fig:next-range}
\end{figure}

\section{Representative $S$-ranges and Types I and II}\label{sec:types}
 
Let $X$ be a set in a general position and $S$ be a nice shape. Let $Y$ be any hyperedge in $\cH(X,S,k)$.
An $S$-range $S'$ is a \emph{representative $S$-range for $Y$} if $Y = X \cap S'$ and among all such $S$-ranges $S'$ has the maximum number of points from $Y$ on its boundary.
From Lemma~\ref{lem:small-facts}~\ref{enum:two-on-boundary} it follows that each hyperedge has a representative range and if $S'$ is a representative $S$-range for $Y$, then $S'$ has two or three points of $X$ on its boundary.
 
\medskip
 
\noindent
We say that $S'$ is of {\bf Type~I} if $|\partial S' \cap X| = 3$ and of {\bf Type~II} if $|\partial S' \cap X| = 2$.

\medskip

\noindent
We say that $Y$ is of \emph{Type~I} if it has a representative range of Type~I, otherwise it is of \emph{Type~II}.
Note that in total we have at most $\binom{k}{3}$ many Type~I ranges representing a Type~I hyperedge since by Lemma~\ref{lem:small-facts}~\ref{enum:unique-range} any three points of $X$ are on the boundary of only one $S$-range.
On the other hand, every Type~II hyperedge has infinitely many representative ranges, see Figure~\ref{fig:representative-ranges}.
The {\bf representative set} of $Y$ contains all representative ranges for a Type~I set $Y$ and it contains one arbitrarily chosen representative range for a Type~II set $Y$.
We denote a representative set of $Y$ by $\cR(Y)$.

\begin{figure}[htb]
 \centering
 \subfigure[\label{fig:representative-type1}]{
  \includegraphics{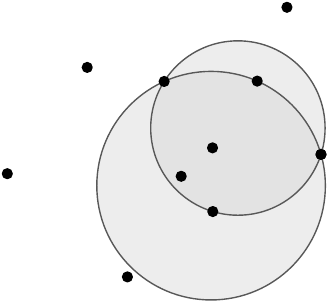}
 }
 \hspace{3em}
 \subfigure[\label{fig:representative-type2}]{
  \includegraphics{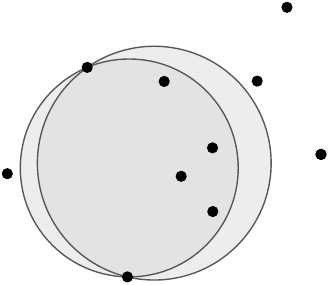}
 }
 \caption{\subref{fig:representative-type1} A Type~I hyperedge and with only two representative ranges. \subref{fig:representative-type2} A Type~II hyperedge with two of its infinitely many representative ranges.}
 \label{fig:representative-ranges}
\end{figure}

We define
\begin{align*}
 \cE^k_1 &= \{ Y \subseteq X \mid Y \mbox{ is a Type~I hyperedge}\},\\
 \cR^k_1 &= \{ S\in \cR(Y) \mid Y \in \cE^k_1 \},\\
 \cE^k_2 &= \{ Y \subseteq X \mid Y \mbox{ is a Type~II hyperedge}\} \mbox{ and}\\
 \cR^k_2 &= \{ S\in \cR(Y) \mid Y \in \cE^k_2\}.
\end{align*}

\begin{lemma}\label{lem:same-boundary-pair}
 All Type~II representative $S$-ranges for the same hyperedge have the same pair of $X$ in their boundary.
\end{lemma}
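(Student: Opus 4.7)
The plan is to proceed by contradiction: suppose $S_1, S_2$ are two Type~II representative $S$-ranges for the same hyperedge $Y$ with $\partial S_1 \cap X = \{p_1, q_1\}$, $\partial S_2 \cap X = \{p_2, q_2\}$, and $\{p_1, q_1\} \neq \{p_2, q_2\}$. Since both capture $Y$, we have $X \cap (S_1 \triangle S_2) = \emptyset$. I would first analyze $\partial S_1 \cap \partial S_2$, which has at most two points by Lemma~\ref{lem:small-facts}~\ref{enum:pseudodisc}. The nested and tangent cases ($|\partial S_1 \cap \partial S_2| \leq 1$) dispatch immediately: if, say, $S_1$ lies in the interior of $S_2$, then $\partial S_2$ is disjoint from $S_1$, contradicting $\{p_2, q_2\} \subseteq Y \subseteq S_1$; the tangent variants are analogous (external tangency forces $|Y|\leq 1$, while internal tangency forces $\{p_2,q_2\}$ into a single tangency point).

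In the remaining case $\partial S_1 \cap \partial S_2 = \{u, v\}$ with $u \neq v$, Lemma~\ref{lem:small-facts}~\ref{enum:subset-halfplane} lets us assume $S_1 \cap L \subsetneq S_2 \cap L$ and $S_1 \cap R \supsetneq S_2 \cap R$ for the open halfplanes $L, R$ defined by $\overline{uv}$. The outer arc $\partial S_1 \cap R$ then lies in $R \setminus S_2$, so any point of $\{p_1, q_1\}$ on it would contradict $\{p_1, q_1\} \subseteq Y \subseteq S_2$; combined with $\partial S_1 \cap \overline{uv} = \{u, v\}$ (by niceness of $S$), this forces $\{p_1, q_1\} \subseteq (\partial S_1 \cap L) \cup \{u, v\}$, and symmetrically $\{p_2, q_2\} \subseteq (\partial S_2 \cap R) \cup \{u, v\}$. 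A brief case analysis using general position (to control when $u$ or $v$ may lie in $X$) leaves us with a very rigid two-arc configuration in which $\{p_1, q_1\}$ and $\{p_2, q_2\}$ sit on opposite sides of $\overline{uv}$.

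The decisive step is to extract from this configuration a Type~I representative of $Y$, contradicting the Type~II hypothesis. My candidate is the unique $S$-range $T$ through $p_1, q_1, p_2$ furnished by Lemma~\ref{lem:small-facts}~\ref{enum:unique-range}. I would place $T$ in the linearly ordered family $\cS_{p_1 q_1}$ by applying Lemma~\ref{lem:small-facts}~\ref{enum:subset-halfplane} to the chord $\overline{p_1 q_1}$: the side of $\overline{p_1 q_1}$ containing $p_2$ pins down whether $T \prec_{p_1 q_1} S_1$ or $T \succ_{p_1 q_1} S_1$. A parallel comparison with $S_2$ through the shared boundary point $p_2 \in \partial T \cap \partial S_2$ would let me track every point of $X$ through the arcs of $\partial T$ and argue that $T \subseteq S_1 \cup S_2$; since $(X \setminus Y) \cap (S_1 \cup S_2) = \emptyset$, this yields $(X \setminus Y) \cap T = \emptyset$, and a symmetric inclusion argument gives $Y \subseteq T$. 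Thus $X \cap T = Y$ with $|\partial T \cap X| \geq 3$, the required Type~I representative of $Y$.

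The main obstacle is precisely this last step. The bookkeeping has to juggle three relevant lines ($\overline{uv}$, $\overline{p_1 q_1}$, and $\overline{p_1 p_2}$) together with the various pseudodisc inclusions they induce, and a clean treatment will likely require splitting into subcases according to which halfplane of $\overline{p_1 q_1}$ contains $q_2$ (and, in the awkward subcases, replacing the candidate by the unique $S$-range through a different triple among $\{p_1, q_1, p_2, q_2\}$). The Type~II hypothesis should enter at this point through the fact that no $S$-range with three boundary $X$-points can lie in the $Y$-capturing interval of $\cS_{p_1 q_1}$, which is what makes the existence of $T$ as above collapse the configuration and deliver the contradiction.
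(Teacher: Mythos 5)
Your setup (localizing $\{p_1,q_1\}$ and $\{p_2,q_2\}$ on the two arcs of the lens $S_1\cap S_2$ via Lemma~\ref{lem:small-facts}~\ref{enum:pseudodisc},\ref{enum:subset-halfplane}, after dispatching the nested/tangent cases) is fine, but the decisive step is a genuine gap, and the specific claim it rests on is false. You want $T$, the unique $S$-range through $p_1,q_1,p_2$, to satisfy $T\subseteq S_1\cup S_2$ so that $X\cap T=Y$. This containment does not follow from the arc configuration and pseudodisc inclusions: take $S$ a disc, $S_1,S_2$ two unit discs forming a long thin lens with $\partial S_1\cap\partial S_2=\{u,v\}$, let $p_1,q_1$ lie on the arc of $\partial S_1$ inside $S_2$ very close to $u$ and $v$ respectively, and let $p_2$ be the midpoint of the arc of $\partial S_2$ inside $S_1$. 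The circumdisc of $\{p_1,q_1,p_2\}$ then has enormous radius and bulges far outside $S_1\cup S_2$ (its center lies far on the other side of $\overline{uv}$), so $T\not\subseteq S_1\cup S_2$; nor is $Y\subseteq T$ automatic. The combinatorial hypotheses ($X\cap S_1=X\cap S_2=Y$, no Type~I representative) constrain $X$, not the sets $S_1,S_2,T$, so they cannot rescue a set containment of this kind; and the fallback of switching to another triple of $\{p_1,q_1,p_2,q_2\}$ is neither worked out nor clearly sufficient in all configurations. So as written the argument cannot be completed along these lines.

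The paper avoids this trap by never trying to hit three prescribed points at once. It takes $L$ to be the halfplane of $\overline{p_1q_1}$ not containing $q_2$ and considers $S_3=\n_L(S_1)$, which exists by Lemma~\ref{lem:next-S-range} since $q_2$ lies in the interior of $S_1\triangle L$. By minimality $S_3$ is sandwiched between $S_1$ and the first new point of $X$ it touches; since $Y$ is Type~II, that new boundary point $z$ cannot belong to $Y$ (otherwise $S_3$ would be a Type~I representative of $Y$), hence $z\in X\setminus Y$, so $z\notin S_2$ and in fact $z\in L\setminus S_2$. Counting boundary intersections then forces $|\partial S_3\cap\partial S_2|\geq 3$, contradicting Lemma~\ref{lem:small-facts}~\ref{enum:pseudodisc}. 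Your closing remark about the Type~II hypothesis entering through the ordered family $\cS_{p_1q_1}$ is in the right spirit, but to make your proof work you would essentially have to replace the circumrange $T$ by this next-range construction.
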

\begin{proof}
 Consider a representative range $S_1$ for a Type~II hyperedge $Y$ with $\{p_1, q_1\} =Y\cap \partial S_1$.
 Assume for the sake of contraction that there is another representative range, $S_2$ for $Y$ with $\{p_2, q_2\}= Y \cap S_2$ and $\{p_2,q_2\} \neq \{p_1,q_1\}$.
 We have that $p_2,q_2 \in S_1$, $p_1,q_1\in S_2$.
 Assume, without loss of generality that $q_2\not\in \{p_1, q_1\}$ and $q_1 \not\in \{p_2, q_2\}$.
 Then $q_2 \in S_1 - \partial S_1$ and $q_1 \in S_2 - \partial S_2$.
  
 \begin{figure}[htb]
  \centering
  \includegraphics{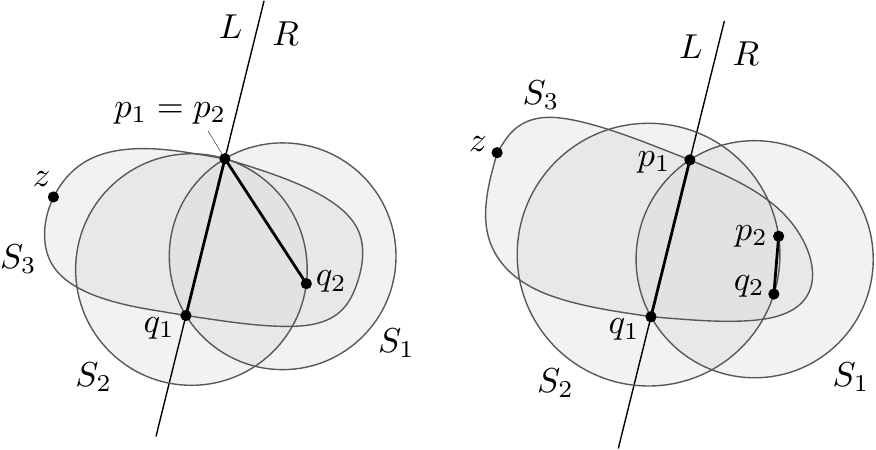}
  \caption{The three cases in the proof of Lemma~\ref{lem:same-boundary-pair}.}
  \label{fig:same-boundary-pair}
 \end{figure}
  
 We need to distinguish the following cases: segments $p_2q_2$ and $p_1q_1$ cross properly, $p_2q_2$ and $p_1q_1$ share a vertex, i.e., $p_2=p_1$, and finally $p_1q_1$ is to the left of $p_2q_2$.
 The first case does not occur by the argument presented in the previous item.
 In the other two cases, let $L$ and $R$ be the halfplanes defined by $\overline{p_1q_1}$ \emph{not} containing $q_2$ and containing $q_2$, respectively.
 Consider the $S$-range $S_3 = \n_L(S_1)$, which exists by Lemma~\ref{lem:next-S-range} as $S_1 \Delta L$ contains $q_2$.
 We see that $R$ contains at least one point in $\partial S_3 \cap \partial S_2$, see Figure~\ref{fig:same-boundary-pair}.
 Moreover, $S_3$ must contain a point $z \in X \setminus Y$ because it is a next range and $Y$ is not of Type~I.
 It follows that $z \in L \setminus S_2$, which implies that the closure of $L$ contains two points in $\partial S_3 \cap \partial S_2$, a contradiction to Lemma~\ref{lem:small-facts}~\ref{enum:pseudodisc}.
\end{proof}

From Lemma~\ref{lem:same-boundary-pair} and the definitions above we immediately get the following.

\begin{align}
  |\cE^k_1| &= |\cR^k_1| - \sum_{Y \in \cE^k_1}(|\cR(Y)| - 1)\label{eq:clearly-1} \\
  |\cE^k_2| &= |\cR^k_2|\label{eq:clearly-2}
 \end{align} 

For a Type~I hyperedge $Y$, let $Y' \subseteq Y$ be the subset of vertices that are on the boundary of at least one representative range for $Y$.
We define the graph $G(Y) = (Y',E_Y)$ with vertex set $Y'$ and edge set $E_Y=\{\{p,q\} \mid p,q \in \partial S', ~S' \in \cR(Y)\}$.
Then $G(Y)$ is the union of triangles, one for each representative range of $Y$.
We call an edge of $G(Y)$ \emph{inner edge} if it is contained in at least two triangles.

\begin{lemma}\label{lem:outerplanar-graph}
 For every Type~I hyperedge $Y$ the graph $G(Y)$ is a maximally outerplanar graph.
 In particular, $G(Y)$ has exactly $|\cR(Y)|-1$ inner edges.
\end{lemma}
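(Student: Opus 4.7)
The plan is to show that the collection of straight-line triangles $\{T_S : S \in \cR(Y)\}$, where $T_S$ is drawn with its three vertices $\partial S \cap X$ at their positions in the plane, is a geometric triangulation of the convex polygon $\mathrm{conv}(Y')$. Once this is established, $G(Y)$ is the edge-set of a triangulation of a convex polygon, hence maximally outerplanar; moreover such a triangulation on $m := |Y'|$ vertices has $m-2$ triangular faces and $m-3$ diagonals. Since Lemma~\ref{lem:small-facts}\ref{enum:unique-range} forces distinct representative ranges to produce distinct boundary triples, we obtain $|\cR(Y)| = m-2$ and hence $|\cR(Y)|-1$ inner edges.

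First, I would establish that $Y'$ is in convex position. Each $y \in Y'$ lies on $\partial S'$ for some $S' \in \cR(Y)$ with $Y \subseteq S'$; since $S'$ is convex, $y$ must be an extreme point of $\mathrm{conv}(Y)$. Second, any two representative triangles $T_{S_1}, T_{S_2}$ have disjoint interiors. When $|T_{S_1} \cap T_{S_2}| = 2$, say $T_{S_1} \cap T_{S_2} = \{p,q\}$, Lemma~\ref{lem:small-facts}\ref{enum:pseudodisc} forces $\partial S_1 \cap \partial S_2 = \{p,q\}$, and Lemma~\ref{lem:small-facts}\ref{enum:subset-halfplane} gives one halfplane of $\overline{pq}$ on which $S_1 \subset S_2$ and the other on which $S_2 \subset S_1$. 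If both third vertices $r_1, r_2$ lay in the same halfplane $L$, one would have $r_1 \in \mathrm{int}(S_2)$ while $r_2 \notin S_1$, contradicting $r_2 \in Y \subseteq S_1$; hence $r_1, r_2$ lie on opposite sides of $\overline{pq}$ and the triangles share exactly the segment $\overline{pq}$. When $|T_{S_1} \cap T_{S_2}| \leq 1$, I would argue that a proper planar crossing of the two triangles would force $\partial S_1 \cap \partial S_2$ to contain at least three points, contradicting Lemma~\ref{lem:small-facts}\ref{enum:pseudodisc}.

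Third, I would verify that the triangles cover $\mathrm{conv}(Y')$. For consecutive hull vertices $v_i, v_{i+1}$ of $Y'$, starting from any representative range with $v_i$ on its boundary and iteratively applying the next-range operation of Lemma~\ref{lem:next-S-range} toward the outside of $\mathrm{conv}(Y)$, the sweep can only terminate when a point of $Y$ joins the boundary (since no $X \setminus Y$ point lies outside the hull); tracing the sweep shows this point must be $v_{i+1}$, producing a representative triangle containing the hull edge $\overline{v_i v_{i+1}}$. Combined with interior-disjointness, the triangles then tile $\mathrm{conv}(Y')$ edge-to-edge.

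The main obstacle is the interior-disjointness claim when $|T_{S_1} \cap T_{S_2}| \leq 1$: the pseudodisc property of Lemma~\ref{lem:small-facts}\ref{enum:pseudodisc} directly bounds only the number of boundary intersections of $S_1, S_2$, and translating a hypothetical planar crossing of the straight-line triangles into a forbidden third boundary intersection of $\partial S_1$ and $\partial S_2$ requires carefully tracking how the chords of the triangles interact with the bounding arcs of $S_1, S_2$.
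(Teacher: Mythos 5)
Your overall skeleton (convex position of $Y'$, pairwise interior-disjointness of the representative triangles, coverage of $\mathrm{conv}(Y')$, then count) matches the paper's, and your convex-position and shared-edge arguments are fine; the crossing case you flag as the main obstacle is in fact handled in the paper at the level of edges by the standard alternation argument (four points in alternating convex order, each range containing all of $Y$ and hence the other range's two boundary points, forcing at least four intersections of $\partial S_1$ and $\partial S_2$, contradicting Lemma~\ref{lem:small-facts}~\ref{enum:pseudodisc}), so that part is recoverable. The genuine gap is in your coverage step, in two respects. First, the sweep argument is built on an inverted premise: since any capturing range is convex and contains $Y$, it contains $\mathrm{conv}(Y)$, so \emph{every} point of $X\setminus Y$ lies \emph{outside} $\mathrm{conv}(Y)$; sweeping ``toward the outside of $\mathrm{conv}(Y)$'' can therefore perfectly well pick up a point of $X\setminus Y$ first, in which case the resulting range captures $k+1$ points and is not a representative range of $Y$, so the sweep need not terminate at a triangle containing the hull edge $\overline{v_iv_{i+1}}$ (the sweep is also underspecified, since $\n_L$ requires a fixed chord $pq$ on the boundary). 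Second, even granting that every hull edge of $\mathrm{conv}(Y')$ lies in some representative triangle, interior-disjointness does not imply a tiling: for a hexagon $v_1\ldots v_6$ the three triangles $v_1v_2v_3$, $v_3v_4v_5$, $v_5v_6v_1$ cover all six hull edges, are interior-disjoint, and still leave the central triangle uncovered. So your concluding sentence ``combined with interior-disjointness, the triangles then tile $\mathrm{conv}(Y')$'' does not follow from what you prove.

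What is missing is precisely the heart of the paper's proof: for every edge $xy$ of $G(Y)$ whose endpoints are not consecutive on the hull of $Y'$, the range $S_3=\n_L(S_1)$ obtained by crossing $\overline{xy}$ away from the third boundary point of $S_1$ is again a representative range of $Y$. This is shown by taking a point $t\in R\cap Y'$ with a representative range $S_2\ni t$ on its boundary, using~\eqref{eq:capture-plus-1} to get $Y\subset S_3$ with $|X\cap S_3|\in\{k,k+1\}$, and then using Lemma~\ref{lem:small-facts}~\ref{enum:pseudodisc},\ref{enum:subset-halfplane} to conclude $S_3\subset S_1\cup S_2$, hence $X\cap S_3=Y$ and $S_3\in\cR(Y)$. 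It is this statement --- every non-hull edge lies in two representative triangles, every hull edge in one --- that forces the triangles to triangulate $\mathrm{conv}(Y')$ (equivalently, that $G(Y)$ is maximally outerplanar); without it, or some substitute of comparable strength, your construction could leave holes and both conclusions of the lemma (maximal outerplanarity and the count $|\cR(Y)|-1$ of inner edges) are unproved.
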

\begin{proof}
 We shall show that $G = G(Y)$ is maximally outerplanar by finding a planar embedding of $G$ in which every vertex lies on the outer face, every inner edge lies in two triangles and every outer edge lies in one triangle.
 To this end draw every vertex of $G$ at the position of its corresponding point in $Y'$ and every edge as a straight-line segment.
 
 First observe that every point in $Y'$ lies on the convex hull of $Y'$.
 Hence, if $G$ is drawn crossing-free, then every vertex of $G$ lies on the outer face of $G$.

 Second, assume for the sake of contraction that two edges $x_1y_1, x_2y_2 \in E(G)$ cross.
 Without loss of generality these four vertices appear on the convex hull of $Y'$ in the clockwise order $x_1,x_2,y_1,y_2$.
 But then the $S$-ranges $S_1$ and $S_2$ with $Y \subset S_i$ and $\{x_i,y_i\} = \partial S_i \cap Y$ for $i=1,2$ have at least four intersections on their boundaries.
 See Figure~\ref{fig:outerplanar-2} for an illustration.
 This is a contradiction to Lemma~\ref{lem:small-facts}~\ref{enum:pseudodisc}, i.e., that $|\partial S_1 \cap \partial S_2| \leq 2$.
  
 \begin{figure}[htb]
  \centering
  \subfigure[\label{fig:outerplanar-2}]{
   \includegraphics{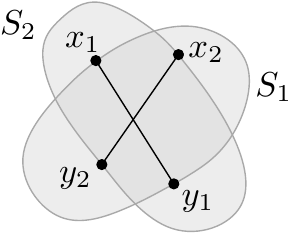}
  }
  \hspace{2em}
  \subfigure[\label{fig:outerplanar-3}]{
   \includegraphics{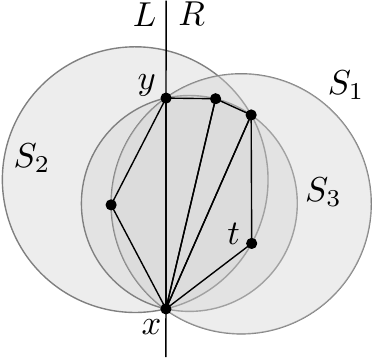}
  }
  \caption{
  \subref{fig:outerplanar-2} If two edges $x_1y_1$ and $x_2y_2$ in $G(Y)$ cross, then the corresponding $S$-ranges have at least four intersections on their boundaries.
  \subref{fig:outerplanar-3} Illustration of the proof of Lemma~\ref{lem:outerplanar-graph}.}
 \end{figure}
 
 Finally, for any edge $xy$ in $G$ we shall show that $x$ and $y$ are consecutive points on the convex hull of $Y'$, or $xy$ is contained in two triangles.
 Note that this proves that $G$ is maximally outerplanar.
 
 As $xy$ is an edge in $G$, there is a representative range $S_1 \in \cR(Y)$ with $x,y \in \partial S_1$.
 Let $L,R$ be the two open halfplanes defined by $\overline{xy}$.
 Assume without loss of generality that the third point in $\partial S_1 \cap X$ lies in $L$.
 We distinguish two cases.
 
 If $R \cap Y' = \emptyset$, then $x$ and $y$ are consecutive on the convex hull of $Y'$ and the edge $xy$ is an outer edge of $G$.
 Otherwise, there exists some point $t \in R \cap Y'$ and we shall show that the edge $xy$ lies in two triangles.
 Let $S_2 \in \cR(Y)$ be a representative range with $t \in \partial S_2$, which exists as $t \in Y'$.
 Also consider $S_3 = \n_L(S_1)$.
 By~\eqref{eq:capture-plus-1} we have $Y \subset S_3$ and $|X \cap S_3| \in \{k,k+1\}$.
 If $S_2 = S_3$, then the edge $xy$ lies in two triangles, one for $S_1$ and one for $S_2 = S_3$.
 Otherwise the situation is illustrated in Figure~\ref{fig:outerplanar-3}.
 We have $x,y \in S_2$ and $t \in S_3 - \partial S_3$.
 It follows that $\partial S_2$ and $\partial S_3$ intersect (at least) twice in the closure of $R$.
 By Lemma~\ref{lem:small-facts}~\ref{enum:pseudodisc},\ref{enum:subset-halfplane}, we have $L \cap S_2 \subset L \cap S_3$.
 Thus, $S_3 \subset S_1 \cup S_2$, which implies that $S_3 \cap X = Y$, i.e., $S_3 \in \cR(Y)$ and the edge $xy$ lies in two triangles of $G$, as desired.
 
 To summarize, $G$ is drawn crossing-free with all vertices on the outer face, and every edge of $G$ lies on the convex hull of $Y'$ or in two triangles.
 This implies that $G$ is maximally outerplanar.
\end{proof}

\subsection{The number of Type~I ranges.}\label{sec:type-I}
 
Recall that for $i=2,\ldots,|X|$ we denote by $\cA_i$ the set of representative halfplanes for $i$-sets of $X$.
In the next two proofs we treat representative halfplanes similarly to $S$-ranges.
Indeed, one can think of a halfplane as a homothet of $S$ with infinitely large dilation and at the same time infinitely large translation (however, formally this is incorrect!).
In the light of Lemma~\ref{lem:next-S-range}, a halfplane defined by $\overline{pq}$ arises as a kind of limit object of a sequences of $S$-ranges in $\cS_{pq}$.
Accordingly, we defined $\n_L(S_1) = L$ if $(S_1 \triangle L) \cap X = \emptyset$.

\begin{proposition}\label{prop:type-I}
 For $k\geq 3$ we have
 \[
  |\cR^k_1| = 2(k-2)|X| - \sum_{i=2}^{k-1}|\cA_i| - (k-1)(k-2).
 \]
\end{proposition}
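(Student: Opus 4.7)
The plan is to proceed by induction on $k$. For the base case $k=3$, elements of $\cR^3_1$ are the ``empty'' $S$-ranges through triples of $X$. By Lemma~\ref{lem:small-facts}~\ref{enum:unique-range} they biject with their boundary triples, and these triples form the faces of the convex-distance Delaunay triangulation of $X$. Since the outer face is bounded by the convex hull of $X$, which has $a_1 = |\cA_2|$ vertices (Lemma~\ref{lem:halfplanes}), Euler's formula yields $|\cR^3_1| = 2n - a_1 - 2 = 2n - |\cA_2| - 2$, matching the claim.

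For the inductive step ($k \ge 4$), the goal reduces to proving
\[
|\cR^k_1| - |\cR^{k-1}_1| = 2n - |\cA_{k-1}| - 2(k-2).
\]
I would establish this via a double count of the $3|\cR^k_1|$ tuples $(S,\{p,q\})$ with $S\in\cR^k_1$ and $\{p,q\}\subseteq\partial S\cap X$. Fix such a tuple, let $r$ be the third boundary point of $S$, and let $R$ be the halfplane of $\overline{pq}$ not containing $r$. By Lemma~\ref{lem:next-S-range} and~\eqref{eq:capture-minus-1}, $\n_R(S)$ captures $k$ or $k-1$ points and falls into one of three types: (a)~another element of $\cR^k_1$ (a horizontal transition within a run at level $k$ of the $\prec_{pq}$-sequence); (b)~an element of $\cR^{k-1}_1$ (a downward transition); or (c)~the halfplane $R$, which a direct capture-count calculation (using that $r \in L$ forces $|I_L|=0$, so $|X\cap\bar R|=k-1$) places in $\cA_{k-1}$. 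Using the inverse property of the next range noted in Section~\ref{sec:notation}, tuples in case~(a) group into horizontal runs, tuples in case~(c) biject with $\cA_{k-1}$, and tuples in case~(b) biject with analogous ``upward'' tuples on $\cR^{k-1}_1$ whose $\n_L$ lands in $\cR^k_1$.

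Performing the symmetric count on the $3|\cR^{k-1}_1|$ tuples $(S'',\{p,q\})$, now using the direction $L$ \emph{containing} the third boundary point of $S''$ so that $\n_L(S'')$ goes upward, horizontally, or to a halfplane in $\cA_{k-1}$, produces a parallel identity. Subtracting the two identities cancels the shared case-(b) and case-(c) contributions and leaves a relation between $|\cR^k_1|-|\cR^{k-1}_1|$ and the difference of horizontal run-internal tuple counts at the two levels. The main obstacle will be to evaluate this leftover run-count difference as $2n - |\cA_{k-1}| - 2(k-2)$; I expect this requires a separate accounting of the halfplane endpoints of each extended $\prec_{pq}$-sequence, summed over all $\binom{n}{2}$ pairs, where each point of $X$ contributes $+2$ through its two extremal-halfplane roles and the $-2(k-2)$ term records pairs whose sequence endpoints straddle level~$k$, invoking Lemma~\ref{lem:halfplanes}'s bijection $a_i = |\cA_{i+1}|$ to convert between $i$-set and halfplane counts.
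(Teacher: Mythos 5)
Your inductive framework (base case $k=3$ via the Delaunay triangulation, then a two-level double count of tuples $(S,\{p,q\})$ with $S\in\cR^k_1$) is a genuinely different route from the paper, but as it stands it has a broken step and a large admitted gap. The broken step is the claimed cancellation of the halfplane cases. Fix a representative halfplane $H\in\cA_{k-1}$ with boundary pair $\{p,q\}$ and sweep through the ranges of $\cS_{pq}$ starting at the end of the $\prec_{pq}$-order that converges to $H$ (these ranges capture exactly $X\cap H$). The first range with a third point of $X$ on its boundary either \emph{gains} a point on the side opposite $H$ (then it lies in $\cR^k_1$ and is the unique preimage of $H$ under your downward map, your case~(c)) or \emph{loses} a point of $X\cap H$ (then it lies in $\cR^{k-1}_1$ and is the unique preimage of $H$ under your upward map, your case~(c')). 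Exactly one of the two happens for each $H$, so the two counts \emph{partition} $|\cA_{k-1}|$; they are not each equal to $|\cA_{k-1}|$ and they do not cancel when you subtract the two identities. (The paper's analogous accounting, $|P_4|=|\cA_k|$ in Proposition~\ref{prop:extensions}, only works because preimages there are allowed to be Type~II ranges as well, which your Type-I-only bookkeeping excludes.) After correcting this, your subtraction leaves a configuration-dependent combination of the horizontal-run counts at the two levels and of the split of $|\cA_{k-1}|$, and the crucial evaluation of that leftover as $2|X|-|\cA_{k-1}|-2(k-2)$ is exactly the part you defer (``I expect this requires\dots''). That evaluation is essentially the whole content of the proposition, and the heuristic you offer (each point contributing $+2$ via extremal halfplane roles, a $-2(k-2)$ correction for sequences straddling level $k$) is not an argument.

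For comparison, the paper avoids induction entirely: for each $p\in X$ and each $a\in\{1,\ldots,k-2\}$ it constructs, by a next-range sweep anchored at the vertical line through $p$, a unique object with ``lower second point'' $p$ and property $(a,b)$, which is either a Type~I range in $\cR^k_1$ or a representative halfplane in $\bigcup_{i\le k-1}\cA_i$; doing the same for upper second points and proving uniqueness gives the identity $2(k-2)|X|-(k-1)(k-2)=|\cR^k_1|+\sum_{i=2}^{k-1}|\cA_i|$ in one stroke. That anchoring device is precisely what your sketch is missing to turn the run-count difference into a closed formula; if you want to salvage the inductive route you would need an analogous per-point accounting, at which point you are close to reproving the paper's argument level by level. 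A minor further point: your base case uses, without proof, that the inner faces of the convex-distance Delaunay triangulation are exactly the triples lying on empty $S$-ranges and that it triangulates the convex hull; this is true for nice $S$ and general position but should be cited or argued.
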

\begin{proof}
 For a point $p \in X$ and a set $S'$ that is either a Type~I $S$-range or a representative halfplane, we say that $p$ is the \emph{second point} of $S'$ if
 \begin{itemize}
  \item $\partial S' \cap X = \{p,q,r\}$ and the $x$-coordinate of $p$ lies strictly between the $x$-coordinates of $q$ and $r$, or
  \item $\partial S' \cap X = \{p,q\}$ and $S'$ is on the right when going from $p$ to $q$ along $\overline{pq}$.  
 \end{itemize}
 Clearly, every representative halfplane has a unique second point.
 Note that also every Type~I $S$-range has a unique second point, because no two points in $X$ have the same $x$-coordinate.
 Moreover, if $p$ is the second point of $S'$ and $\ell$ denotes the vertical line through $p$, then $S' \cap \ell$ is a vertical segment (if $S'$ is an $S$-range) or ray (if $S'$ is a halfplane) with an endpoint $p$.
 We say that $p$ is a \emph{lower}, respectively \emph{upper}, second point of $S'$ if $S' \cap \ell$ has $p$ as its lower, respectively upper endpoint.

 \medskip

 Now, fix $p \in X$ and the vertical line $\ell$ through $p$.
 Let $L$ and $R$ denote the left and right open halfplanes defined by $\ell$, respectively.
 We want to show that roughly $k-2$ $S$-ranges in $\cR_1^k$ have the lower second point $p$.
 We say $S' \in \cR^k_1 \cup \bigcup_{i \geq 2} \cA_i$ has \emph{property $(a,b)$} if
 \begin{itemize}
  \item $S'$ has the lower second point $p$,
  \item $L \cap S'$ contains exactly $a$ points from $X$, one on $\partial S'$ if $a \geq 1$, and
  \item $R \cap S'$ contains exactly $b$ points from $X$.
 \end{itemize}
 
 \begin{claim}
  Let $m \geq 0$ be the number of points in $X$ whose $x$-coordinate is smaller than the $x$-coordinate of $p$.
  Then for each $a=1,\ldots,\min(m,k-2)$ there exists $S_{a,p} \in \cR^k_1 \cup \bigcup_{i \geq 2} \cA_i$ with property $(a,b)$, so that
  \begin{itemize}
   \item either $a + b \leq k-2$ and $S_{a,p}$ is a halfplane,
   \item or $a + b = k-1$ and $S_{a,p}$ is a Type~I range. 
  \end{itemize}
 \end{claim}
 \begin{claimproof}
  Let $a \in \{1,\ldots,\min(m,k-2)\}$ be fixed.
  We shall first construct $S$-ranges with properties $(0,0), (1,0), \ldots, (a,0)$, respectively, and then $S$-ranges with properties $(a,1), (a,2), \ldots, (a,b)$, respectively.
  We start with any $S$-range $S_0$ with property $(0,0)$.
  Let $q$ denote the upper endpoint of $\ell \cap S_0$.
  We choose $S_0$ so that no four points of $X \cup q$ lie on the boundary of any $S$-range.
  Then we define for $i = 1,\ldots,a$ $S_i$ to be the next $S$-range of $S_{i-1}$ in $L$, i.e., $S_i = \n_L(S_{i-1})$.
  By~\eqref{eq:capture-minus-1},\eqref{eq:capture-plus-1}, $S_i$ has property $(i,0)$.
  In particular, $S_a$ has property $(a,0)$.
  See Figure~\ref{fig:find-S0} for an illustration.
 
 \begin{figure}[htb]
  \centering
  \subfigure[\label{fig:find-S0}]{
   \includegraphics[scale=0.85]{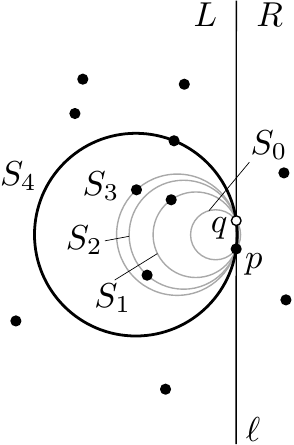}
  }
  \hfill
  \subfigure[\label{fig:find-Ti-cases}]{
   \includegraphics[scale=0.85]{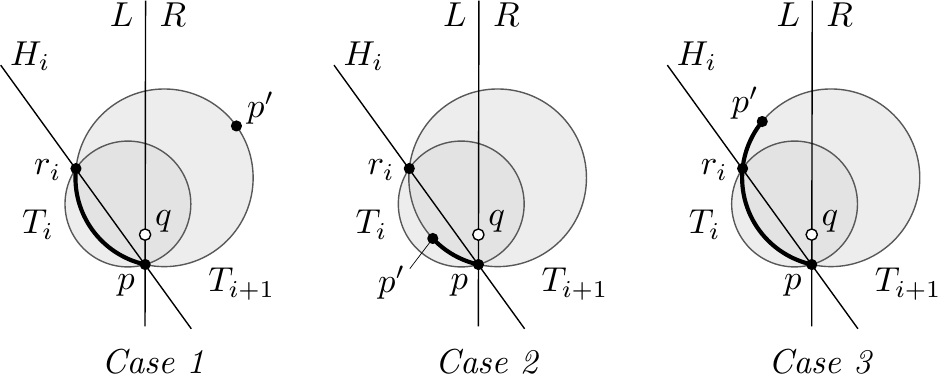}
  }
  \caption{
  \subref{fig:find-S0} The $S$-ranges $S_0,\ldots,S_4$ have property $(0,0),\ldots,(4,0)$, respectively.
  \subref{fig:find-Ti-cases} The three cases in the construction of $T_{i+1}$ based on $T_i$ and $r_i$. The component $\gamma_{i+1}$ of $\partial T_{i+1} \setminus \{p,r_{i+1}\}$ that is completely contained in $L$ is drawn bold.}
 \end{figure}
  
 Next, we shall construct a sequence $T_0,T_1,\ldots T_t$ of $S$-ranges and possibly one halfplane, and a sequence $r_0,r_1,\ldots r_t$ of elements of $X$ with the following properties.
 \begin{enumerate}[label = \textbf{\Alph*)}]
  \item $r_i \in \partial T_i \cap L$ ($i=1,\ldots,t$).
  \item $\ell \cap T_i$ is a segment strictly shorter than $\ell \cap T_{i+1}$ ($i=1,\ldots,t-1$).
  \item $|T_i \cap R \cap X| = x_i$ ($i=1,\ldots,t$) with $x_i \leq x_{i+1}$ ($i=1,\ldots,t-1$).
  \item $|(T_i \setminus \gamma_i) \cap L \cap X| = a$, where $\gamma_i$ denotes the component of $\partial T_i \setminus \{p,r_i\}$ that is completely contained in $L$ ($i=1,\ldots,t$).
  \item When $x_i<x_{i+1}$, then $T_{i+1}$ has property $(a,x_{i+1})$, i.e., $T_{i+1}$ is a Type~I range with the second point $p$ ($i=1,\ldots,t-1$).
  \item $T_t$ is a halfplane.
 \end{enumerate}

 We construct the sequence $T_0,T_1,\ldots,T_t$ as follows.
 Let $T_0 = S_a$, $r_0 \in \partial T_0 \cap (L\cap X)$.
 Assume that $T_0,\ldots, T_i$ and $r_0,\ldots,r_i$ have been constructed and $T_i$ is not a halfplane.
 Let $H_i$ denote the right halfplane defined by $\overline{pr_i}$, i.e., $q \in H_i$.
 We define $T_{i+1} = \n_{H_i}(T_i)$.
 Then by Lemma~\ref{lem:next-S-range} we have $T_{i+1} \cap H_i \supset T_i \cap H_i$ and hence the segment $\ell \cap T_i$ is strictly shorter than $\ell \cap T_{i+1}$.
 Moreover, Lemma~\ref{lem:next-S-range} implies that $|(T_{i+1} \setminus \gamma_{i+1}) \cap L \cap X| = a$ and $x_{i+1} = |T_{i+1} \cap R \cap X| \in \{x_i,x_{i+1}\}$.
   
 Finally, we shall define the point $r_{i+1}$.
 If $T_{i+1} = H_i$ is a halfplane, we set $r_{i+1} = r_i$, $t = i+1$ and the sequence is complete.
 Otherwise, $T_{i+1}$ is a bounded $S$-range, and we consider the unique point $p'$ in $(\partial T_{i+1} \setminus \partial T_i) \cap X$.
 We distinguish three cases.
 
  \begin{enumerate}[itemsep = 5pt, itemindent = 2.2em, label = \textit{Case~\arabic*:}]
   \item \textit{$p' \in R$.} We have that $|T_{i+1} \cap L \cap X| = a$, $|T_{i+1} \cap R \cap X| = x_i + 1$ and $\partial T_{i+1} \cap X = \{r_i,p,p'\}$. So $T_{i+1}$ has property $(a,x_{i+1})$ with $x_{i+1} = x_i + 1$. Set $r_{i+1} = r_i$, which implies $\gamma_{i+1} \cap X = \emptyset$.
   
   \item \textit{$p' \in L \setminus H_i$.}  Then $|T_{i+1} \cap L \cap X| = a$ and $|T_{i+1} \cap R \cap X| = x_i$, just like $T_i$. In this case we set $r_{i+1} = p'$, which gives again $\gamma_{i+1} \cap X = \emptyset$.
 
   \item \textit{$p' \in L\cap H_i$.} Then $|T_{i+1} \cap L \cap X| = a+1$ and $|T_{i+1} \cap R \cap X| = x_i$. We set $r_{i+1} = p'$, which implies $r_i \in \gamma_{i+1}$ and hence $|(T_{i+1} \setminus \gamma_{i+1}) \cap L \cap X| = a$. 
 \end{enumerate} 
 
 We refer to Figure~\ref{fig:find-Ti-cases} for an illustration.
 We see that if $T_{i+1}$ is not a halfplane, we either have $r_{i+1} \neq r_i$ or $x_{i+1} > x_i$.
 Since there are finitely many possibilities for $r_i$ and $x_i$ and no pair $\{r_i,x_i\}$ occurs twice, at some point $T_{i+1}$ is a halfplane.
 
 Note that $T_t$ is a halfplane with property $(a,x_t)$.
 Hence, if $x_t < k-1-a$, then $T_t$ is the desired $S$-range $S_{a,p} \in \bigcup_{i \geq 2} \cA_i$.
 Otherwise, a subsequence of $T_0,T_1,\ldots T_t$ consists of Type~I ranges with properties $(a,0),(a,1),\ldots,(a,b)$ with $a+b = k-1$ and the last element of this subsequence is the desired $S$-range $S_{a,p} \in \cR^k_1$.
 This completes the proof of the claim.
 \end{claimproof}

 Note that for every $S' \in \cR^k_1 \cup \bigcup_{i \geq 2} \cA_i$ we have $S' = S_{a,p}$ for at most one pair $(a,p)$ of a number $a \in \{1,\ldots,k-2\}$ and a point $p \in X$.
 The above claim states that for given $(a,p)$ we find $S_{a,p}$, unless fewer than $a$ points in $X$ have smaller $x$-coordinate than $p$.
 This rules out $\binom{k-1}{2}$ of the $(k-2)|X|$ pairs $(a,p)$ and we conclude that
 \begin{equation}
  (k-2)|X| - \binom{k-1}{2} \leq |\cR_1^{k,\downarrow}| + \sum_{i=2}^{k-1}|\cA^{\downarrow}_i|,\label{eq:lower-middle-point}
 \end{equation}
 where $\cR_1^{k,\downarrow} \subseteq \cR_1^k$, respectively $\cA^{\downarrow}_i \subseteq \cA_i$ ($i=2,\ldots,k-2$), are the subsets of Type~I ranges, respectively representative halfplanes, with a \emph{lower} second point.
 
 By symmetry, we obtain an analogous inequality for the Type~I ranges $\cR^{k,\uparrow}_1$ and representative halfplanes $\cA^\uparrow_i$ with an \emph{upper} second point.
 Together with $\cR_1^{k,\downarrow} \cap \cR_1^{k,\uparrow} = \emptyset$ and $\cA^{\downarrow}_i \cap \cA^{\uparrow}_i = \emptyset$ for $i \geq 2$ this shows that
 \begin{equation}
  2(k-2)|X|-(k-1)(k-2) \leq |\cR_1^{k,\downarrow}| + |\cR_1^{k,\uparrow}| + \sum_{i=2}^{k-1}(|\cA^{\downarrow}_i| + |\cA^{\uparrow}_i|) \leq |\cR^k_1| + \sum_{i=2}^{k-1}|\cA_i|.\label{eq:lower-and-upper}
 \end{equation}
 In order to finish the proof of Proposition~\ref{prop:type-I} it remains to prove that~\eqref{eq:lower-middle-point} (and hence also~\eqref{eq:lower-and-upper}) holds with equality.
 For this, we show that for every point $p \in X$ and every $a \in \{1,\ldots,k-2\}$ there is at most one $S$-range in $\cR_1^k$ with property $(a,b)$ for $a+b = k-1$ and at most one halfplane in $\bigcup_{i \geq 2} \cA_i$ with property $(a,b)$ for $a + b \leq k-2$.
 If $S_1,S_2 \in \cR_1^k$ are two distinct $S$-ranges with the same lower second point $p$, then by Lemma~\ref{lem:small-facts}~\ref{enum:subset-halfplane} $S_1 \cap L \subset S_2 \cap L$ or $S_2 \cap L \subset S_1 \cap L$.
 Observe that, if $S_1,S_2 \in \bigcup_{i\geq 2} \cA_i$ are distinct closed halfplanes with the same lower second point $p$, then we also have $S_1 \cap L \subset S_2 \cap L$ or $S_2 \cap L \subset S_1 \cap L$.
 So in either case we may assume that $S_1 \cap L \subset S_2 \cap L$.
 Now, if $S_2$ has property $(a,b)$, then $\partial S_2 \cap L \cap X \neq \emptyset$ and hence $|S_2 \cap L \cap X| > |S_1 \cap L \cap X|$.
 Thus $S_1$ and $S_2$ can not both have property $(a,b)$ for the same $a$.
 
 We conclude that inequality~\eqref{eq:lower-middle-point} holds with equality.
 Thus~\eqref{eq:lower-and-upper} also holds with equality, which is the statement of Proposition~\ref{prop:type-I}.
\end{proof}

\subsection{Relation between the number of Type~I and Type~II ranges.}\label{sec:exact-bound}

Recall that for a fixed Type~I hyperedge $Y$ in $\cH(X,S,k)$ we denote by $\cR(Y)$ the set of representative ranges for $Y$. 

\begin{proposition}\label{prop:extensions}
 For $k \geq 3$ we have
 \[
  3|\cR^k_1| + 2|\cR^k_2| = 3|\cR^{k+1}_1| + |\cA_k| + 2\sum_{Y \in \cE^k_1}(|\cR(Y)|-1).
 \]
\end{proposition}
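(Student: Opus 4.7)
The plan is to prove the identity by double-counting a set of \emph{certificates}. Define a certificate to be a triple $(S,\{p,q\},L)$ where $S\in\cR^k_1\cup\cR^k_2$, $\{p,q\}\subseteq\partial S\cap X$, and $L$ is an open halfplane defined by $\overline{pq}$ whose opposite halfplane $R$ contains no point of $\partial S\cap X\setminus\{p,q\}$.

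The first count recovers the LHS. For each Type~I representative $S\in\cR^k_1$ with three boundary points, each boundary pair admits exactly one valid halfplane (the one containing the third boundary point), for three certificates per $S$. For each Type~II representative $S\in\cR^k_2$ with two boundary points, both halfplanes defined by $\overline{pq}$ trivially satisfy the condition, for two certificates per $S$. Summing yields exactly $3|\cR^k_1|+2|\cR^k_2|$ certificates.

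The second count classifies certificates by their image under $\n_L$. By equation~\eqref{eq:capture-plus-1}, the image $\n_L(S)$ is either (a)~a halfplane, in which case it must lie in $\cA_k$; (b)~a Type~I $S$-range in $\cR^{k+1}_1$; or (c)~a Type~I $k$-capturing range representing the same hyperedge $Y$ as $S$ (which forces $S$ itself to be Type~I, since otherwise $Y$ would be Type~II admitting a Type~I representative, contradicting the definition). I then show that each $H\in\cA_k$ receives exactly one preimage: the unique certificate whose $S$ is the $\prec_{pq}$-maximal $S$-range in $\cS_{pq}$ capturing $H\cap X$, identified by Lemma~\ref{lem:same-boundary-pair} as either the chosen Type~II representative or the topmost Type~I representative of $Y=H\cap X$. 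Each $S_2\in\cR^{k+1}_1$ with $\partial S_2\cap X=\{p,q,z\}$ receives exactly three preimages, one per boundary pair $\{p',q'\}\subset\{p,q,z\}$: for such a pair, $L$ is the halfplane containing the remaining boundary point, and $S$ is the unique $k$-representative in the interval of $\cS_{p'q'}$ immediately preceding $S_2$ in $\prec_{p'q'}$. Finally, for case~(c), by Lemma~\ref{lem:outerplanar-graph} the graph $G(Y)$ has exactly $|\cR(Y)|-1$ inner edges, each shared by two representatives $S_1,S_2\in\cR(Y)$ whose third boundary points lie on opposite sides of $\overline{pq}$; an argument parallel to that in the proof of Lemma~\ref{lem:outerplanar-graph} shows $\n_{L_i}(S_i)$ equals the other representative, contributing two certificates per inner edge. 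Summing the three case counts yields $|\cA_k|+3|\cR^{k+1}_1|+2\sum_{Y\in\cE^k_1}(|\cR(Y)|-1)$, matching the RHS.

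The main obstacle is case~(b), namely verifying that for each $S_2$ and each boundary pair the $k$-representative preimage exists and is unique. A careful argument is needed using the pseudodisc property from Lemma~\ref{lem:small-facts}, combined with a crossing-edge analysis akin to that in Lemma~\ref{lem:outerplanar-graph}'s proof, to rule out configurations where the $k$-hyperedge captured by the preceding interval has no representative with boundary pair $\{p',q'\}$; in the well-behaved cases the preimage is either the Type~I representative of $Y_1=X\cap S_2\setminus\{w\}$ at the lower endpoint of the interval (if $Y_1\in\cE^k_1$) or the unique Type~II representative of $Y_1$ (if $Y_1\in\cE^k_2$, whose boundary pair equals $\{p',q'\}$ by Lemma~\ref{lem:same-boundary-pair}). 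A secondary check is that cases~(a)--(c) partition the certificates without overlap, which follows from the trichotomy of $\n_L(S)$ together with the observation that case~(c) can occur only for Type~I starting ranges.
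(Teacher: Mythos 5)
Your double count is the same one the paper uses: your ``certificates'' $(S,\{p,q\},L)$ are exactly the paper's pairs $(S_1,\n_L(S_1))$ with $S_1\in\cR^k_1\cup\cR^k_2$ and $X\cap S_1\subseteq X\cap \n_L(S_1)$, your first count is its $|P_1|+|P_2|=3|\cR^k_1|+2|\cR^k_2|$, and your classification (a)--(c) of images is its partition $P_3\dcup P_4\dcup P_5$; the case~(c) count via inner edges of $G(Y)$ also coincides. The problem is that the two remaining counts are not actually established in your write-up, and you say so yourself: the verification that each $S_2\in\cR^{k+1}_1$ has exactly three preimages is labelled ``the main obstacle'' and deferred to ``a careful argument'' that is never given. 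That is the heart of the proposition, not a routine check: the danger you correctly identify --- that the $k$-element set $Y_1$ captured by the ranges of $\cS_{p'q'}$ just below $S_2$ might be a Type~I hyperedge none of whose representatives passes through $\{p',q'\}$, leaving that boundary pair with no preimage --- must be ruled out, and nothing in your sketch does so. The paper closes exactly this hole with a concrete mechanism you do not have: for each boundary vertex $p_i$ of $S_2$ it examines $S'=\n_{\bar H_i}(S_2)$ and splits on $|X\cap S'|$; if $S'$ captures $k$ points it is itself the (Type~I) preimage, if it captures $k+1$ points the intermediate hyperedge is Type~II and its chosen representative (whose boundary pair is forced by Lemma~\ref{lem:same-boundary-pair}) is the preimage, and $k+2$ is impossible since $p_i\in S_2\setminus S'$.

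Your treatment of case~(a) has a related defect. The object you name, ``the $\prec_{pq}$-maximal $S$-range in $\cS_{pq}$ capturing $H\cap X$,'' does not exist: by Lemma~\ref{lem:next-S-range} the ranges of $\cS_{pq}$ capturing $Y=H\cap X$ form an up-set in $\prec_{pq}$ with no maximal element (they accumulate at the halfplane), so this does not identify a preimage. Moreover Lemma~\ref{lem:same-boundary-pair} speaks only about representative ranges of \emph{Type~II} hyperedges, so it cannot by itself produce a representative of $Y$ with boundary pair exactly $\{p,q\}$ when $Y$ is Type~I; existence of a preimage for $H$ in that case needs a construction, which the paper supplies by taking, among the $S$-ranges through $p$, $q$ and a third point of $Y$, the one whose intersection with $H$ is inclusion-maximal. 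Uniqueness also needs the observation (made in the paper at the end of the $P_3$/$P_4$ analysis via Lemma~\ref{lem:small-facts}~\ref{enum:subset-halfplane}) that two distinct candidates through $\{p,q\}$ would be nested in $L$ and hence capture different sets. So: right strategy, correct bookkeeping of the easy counts, but the existence/uniqueness of preimages for $\cA_k$ and $\cR^{k+1}_1$ --- the only genuinely delicate part of the proposition --- is asserted or postponed rather than proved.
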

\begin{proof}
 Consider the set $P$ of all ordered pairs $(S_1,S_2)$ of an $S$-range $S_1$ and an $S$-range or representative halfplane $S_2$, such that
 \begin{enumerate}[label = (\Alph*)]
  \item $S_1 \in \cR^k_1 \cup \cR^k_2$.\label{enum:origin}
  \item $\partial S_1 \cap \partial S_2$ is a pair $p,q$ of points in $X$.\label{enum:boundary-points}
  \item $S_2 = \n_L(S_1)$, where $L$ is one of the halfplanes defined by $\overline{pq}$.\label{enum:next}
  \item $X \cap S_1 \subseteq X \cap S_2$.\label{enum:subset}
 \end{enumerate}
 For a pair $(S_1,S_2) \in P$ we say that $S_2$ \emph{is an image of} $S_1$ and $S_1$ \emph{is a preimage of} $S_2$.
 Note that, if $S_2$ is an image of $S_1$, then $S_1$ contains $k$ points from $X$ and thus by~\eqref{eq:capture-minus-1},\eqref{eq:capture-plus-1} $S_2$ contains either $k$ or $k+1$ points from $X$.
 In the former case, $S_1$ and $S_2$ are either distinct representative $S$-ranges for the same Type~I hyperedge in $\cH(X,S,k)$ or $S_2$ is a halfplane, see Figure~\ref{fig:type-I-extensions}, while in the latter case $S_2$ is a Type~I range in $\cH(X,S,k+1)$, see Figure~\ref{fig:type-I-extensions} and~\subref{fig:type-II-extensions}.

 \begin{figure}[htb]
  \centering
  \subfigure[\label{fig:type-I-extensions}]{
   \includegraphics[scale=0.85]{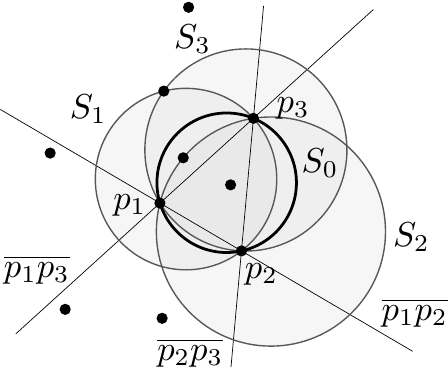}
  }
  \hspace{0.5em}
  \subfigure[\label{fig:type-II-extensions}]{
   \includegraphics[scale=0.85]{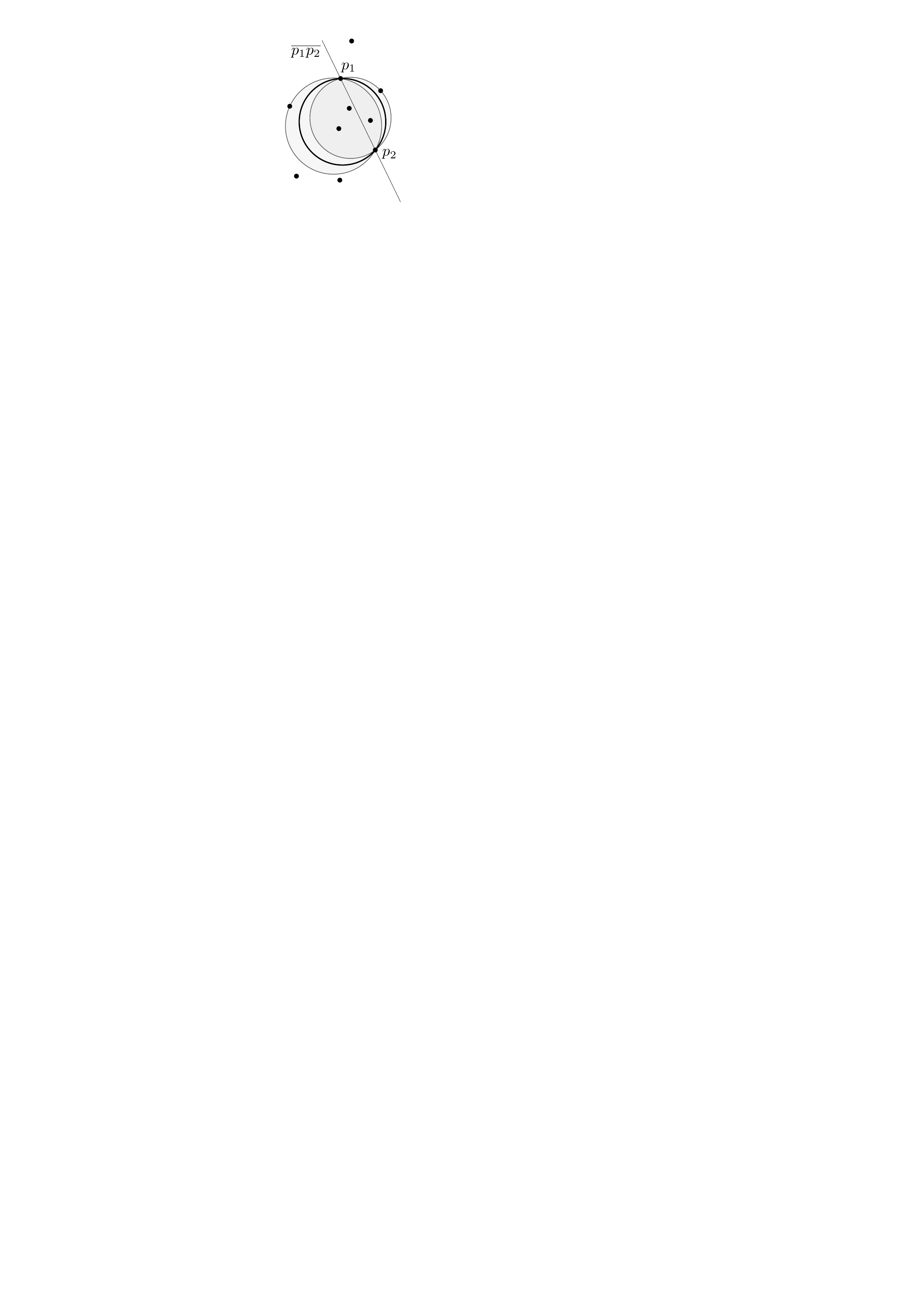}
  }
  \hspace{0.5em}
  \subfigure[\label{fig:same-hyperedge}]{
   \includegraphics[scale=0.85]{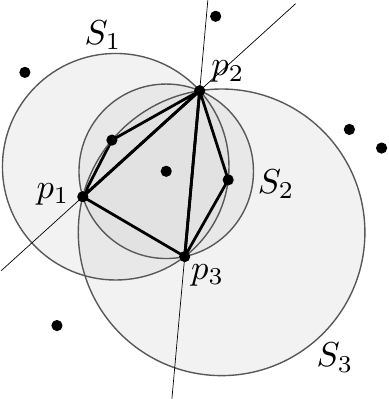}
  }
  \caption{\subref{fig:type-I-extensions} The three images $S_1,S_2,S_3$ of a Type~I range $S_0$ (in bold). Note that $S_1$ and $S_3$ correspond to the same hyperedge in $\cH(X,S,k+1)$, and that $S_0$ and $S_2$ correspond to the same hyperedge in $\cH(X,S,k)$. \subref{fig:type-II-extensions} The two images of a Type~II range (in bold); one being also an image of the Type~I range in~\subref{fig:type-I-extensions}. \subref{fig:same-hyperedge} Three representative ranges for the same hyperedge $Y$ and the outerplanar graph $G(Y)$ (in bold). Here $(S_1,S_2) \in P_4$ and $(S_2,S_1) \in P_4$, both with respect to $\{p_2,p_3\}$, as well as $(S_2,S_3) \in P_4$ and $(S_3,S_2) \in P_4$, both with respect to $\{p_1,p_2\}$.}
 \end{figure}
 
 We partition $P$ in two different ways; once with respect to the possibilities for preimages and once with respect to the possibilities for preimages.
 Firstly, $P = P_1 \dot\cup P_2$, where $P_1$ and $P_2$ contain all pairs $(S_1,S_2)$ with $S_1 \in \cR^k_1$ and $S_1 \in \cR^k_2$, respectively.
 Secondly, $P = P_3 \dot\cup P_4 \dot\cup P_5$, where $P_3$, $P_4$ and $P_5$ contain all pairs $(S_1,S_2)$ with $S_2 \in \cR_1^{k+1}$, $S_2 \in \cA_k$ and $S_2 \in \cR_1^k$, respectively.
 We summarize:
 \begin{align*}
  P_i &= \{(S_1,S_2) \in P \mid S_1 \in \cR_i^k\}, \quad i=1,2,\\
  P_3 &= \{(S_1,S_2) \in P \mid S_2 \in \cR_1^{k+1}\},\\
  P_4 &= \{(S_1,S_2) \in P \mid S_2 \in \cA_k\},\\
  P_5 &= \bigcup_{Y\in \cE_1^k} \{(S_1,S_2)\in P \mid S_1,S_2 \in \cR(Y)\},\\
  P   &= P_1\dot\cup P_2  \quad \text{and} \quad P=P_3\dot\cup P_4 \dot\cup P_5.
 \end{align*}
 We shall show that, on one hand, $|P_1|= 3|\cR^k_1|$ and $|P_2| =  2|\cR^k_2|$, while on the other hand, $|P_3| = 3|\cR^{k+1}_1|$, $|P_4| = |\cA_k|$ and $|P_5| = 2\sum_{Y \in \cE^k_1}(|\cR(Y)|-1)$.
 Together with $|P_1| + |P_2| = |P| = |P_3| + |P_4| + |P_5|$ this will conclude the proof.
 
 \medskip
 
 To prove that $|P_1|= 3|\cR^k_1|$, consider $S_1 \in \cR^k_1$ and let $\partial S_1 \cap X = \{p_0,p_1,p_2\}$.
 For $i=0,1,2$ let $H_i$ be the halfplane defined by $\overline{p_{i-1}p_{i+1}}$ containing $p_i$, where indices are taken modulo~$3$.
 By~\ref{enum:boundary-points},\ref{enum:next} and~\ref{enum:subset}, every image $S_2$ of $S_1$ is the next $S$-range of $S_1$ in $H_i$ for some $i \in \{0,1,2\}$, whose existence and uniqueness is given by Lemma~\ref{lem:next-S-range}~\ref{enum:next-exists}.
 As every such next $S$-range contains $X \cap S_1$, $S_1$ has exactly three images.
 
 \medskip
  
 To prove that $|P_2| =  2|\cR^k_2|$, consider $S_1 \in \cR^k_2$ and let $X \cap \partial S_1 = \{p_1,p_2\}$.
 If $S_2$ is an image of $S_1$, then by~\ref{enum:boundary-points},\ref{enum:next} $S_2 = \n_H(S_1)$, where $H$ is a halfplane defined by $\overline{p_1p_2}$.
 As $\partial S_1 \cap X = \{p,q\}$, the next $S$-range in either halfplane contains $S_1 \cap X$, i.e.,~\ref{enum:subset} is satisfied.
 Hence $S_1$ has two images, one for each halfplane.
  
 \medskip
  
 To prove that $|P_3| = 3|\cR^{k+1}_1|$, consider any $S_2 \in \cR^{k+1}_1$ and let $X \cap \partial S_2 = \{p_0,p_1,p_2\}$.
 Let $H_i$ and $\bar{H}_i$ be the halfplanes defined by $\overline{p_{i-1}p_{i+1}}$ containing $p_i$ and \emph{not} containing $p_i$, respectively, where indices are taken modulo~$3$ again.
 By~\ref{enum:boundary-points},~\ref{enum:next} and~\ref{enum:subset} every preimage $S_1$ of $S_2$ corresponds to a point $p_i \in \{p_0,p_1,p_2\}$ with $p_i \in S_2 \setminus S_1$, such that $S_2 = \n_{H_i}(S_1)$.
 Indeed, if $S' = \n_{\bar{H_i}}(S_2)$ captures $k$ points from $X$, then $(S',S_2) \in P_1$.
 Whereas, if $S'$ captures $k+1$ points from $X$, then $Y = X \cap S' \cap S_2$ is a Type~II hyperedge and for its representative range $S''$ we have $(S'',S_2) \in P_2$.
 Finally, $S'$ can not capture $k+2$ points since $p_i \in S_2 \setminus S'$.
 Hence $S_2$ has exactly three preimages.
  
 \medskip
 
 To prove that $|P_4| = |\cA_k|$, we need to show that every halfplane $H \in \cA_k$ is the image of exactly one $S$-range in $\cR_1^k \cup \cR_2^k$.
 In fact, if $\{p,q\} = X \cap \partial H$ and $Y = H \cap X$, then consider $S$-ranges defined by $p,q$ and a third point from $Y$.
 By Lemma~\ref{lem:small-facts} these $S$-ranges are well-defined and by Lemma~\ref{lem:next-S-range} they are linearly ordered by inclusion in $H$.
 For the $S$-range $S'$ with $S' \cap H$ being inclusion-maximal we have $(S',H) \in P_4$, as desired.
 
 \medskip
 
 Finally, we prove that $|P_5| = 2\sum_{Y \in \cE^k_1}(|\cR(Y)|-1)$.
 Consider any hyperedge $Y \in \cE^k_1$ and the graph $G(Y)$ defined above, whose edges are all pairs $\{p,q\}\subseteq Y$ such that $p,q\in \partial S'$ for a representative $S$-range $S' \in \cR(Y) \subset \cR^{k}_1$. 
 By Lemma~\ref{lem:outerplanar-graph}, connecting any two points in $Y$ that are adjacent in $G(Y)$ with a straight line segment gives a maximally outerplanar drawing of $G(Y)$.
 Now if $(S_1,S_2) \in P_5$, then $\partial S_1 \cap \partial S_2 = \{p,q\}$ is an inner edge of $G(Y)$, see Figure~\ref{fig:same-hyperedge}.
 Moreover, exactly two $S$-ranges in $\cR(Y)$ have $p$ and $q$ on their boundary, because $S$-ranges in $\cR(Y)$ correspond to triangles in $G(Y)$ and $G(Y)$ is maximally outerplanar.

 Thus, every pair $(S_1,S_2) \in P_5$ with $S_2 \in \cR_1^k$ gives rise to an inner edge of $G(Y)$ and every inner edge $\{p,q\}$ of $G(Y)$ gives exactly two such ordered pairs in $P_5$.
 Because a maximally outerplanar graph with $|\cR(Y)|$ triangles has $|\cR(Y)|-1$ inner edges, we have the desired equality.
  
 \medskip
 
 Now we conclude that $|P| = |P_1|+|P_2| = 3|\cR^k_1| + 2|\cR^k_2|$, whereas $|P| = |P_3|+|P_4|+|P_5| = 3|R^{k+1}_1| + |\cA_k| + 2\sum_{Y \in \cE^k_1}(|\cR(Y)|-1)$.
 Together this gives the claimed equality.
\end{proof}

\section{Proof of Theorem~\ref{thm:main}.}\label{sec:upper-bound}

\begin{proof}[Proof of Theorem~\ref{thm:main}]
 For $k = 1$ and any $X$, the hypergraph $\cH(X,S,k)$ clearly has $|X| = (2k-1)|X| - k^2 + 1 - \sum_{i=1}^{k-1}a_i$ hyperedges.
 For $k = 2$ and any $X$, $\cH(X,S,k)$ is the well-known Delaunay triangulation $D_S$ with respect to the convex distance function defined by $S$.
 In particular, every inner face of $D_S$ is a triangle and the outer face is the convex hull of $X$, i.e., has length $a_1$.
 Thus, by Euler's formula the number of hyperedges of $\cH(X,S,2)$ (edges of $D_S$) is given by $3|X|-3-a_1 = (2k-1)|X| - k^2 + 1 - \sum_{i=1}^{k-1}a_i$.
 So we may assume that $k \geq 3$.
 Moreover, by Lemma~\ref{lem:general-position} we may assume that $S$ is nice and $X$ is in general position with respect to $S$.
 
 By Proposition~\ref{prop:extensions} we have
 \begin{equation}
  3|\cR^k_1| + 2|\cR^k_2| = 3|\cR^{k+1}_1| + |\cA_k| + 2\sum_{Y \in \cE^k_1}(|\cR(Y)|-1).\label{eq:extensions}
 \end{equation}
 By Proposition~\ref{prop:type-I} we have
 \begin{align}
  |\cR^k_1| &= 2(k-2)|X| - \sum_{i=2}^{k-1} |\cA_i| - (k-1)(k-2), \mbox{ and} \label{eq:typeI-UB}\\
  |\cR^{k+1}_1| &= 2(k-1)|X| - \sum_{i=2}^k |\cA_i| - k(k-1).\label{eq:typeI-LB}
 \end{align}
 
 Putting~\eqref{eq:clearly-1},~\eqref{eq:clearly-2},~\eqref{eq:extensions},~\eqref{eq:typeI-UB} and~\eqref{eq:typeI-LB} together, we conclude that
 \begin{align*}
  2(|\cE^k_1| + |\cE^k_2|) \hspace{7pt}&\hspace{-7pt}\overset{\eqref{eq:clearly-1},\eqref{eq:clearly-2}}{=} 2|\cR^k_1| -2\sum_{Y \in \cE^k_1}(|\cR(Y)|-1) + 2|\cR^k_2|\\[5pt]
   &\overset{\eqref{eq:extensions}}{=} 3|\cR^{k+1}_1| + |\cA_k| - |\cR^k_1|\\[5pt]
   &\hspace{-7pt}\overset{\eqref{eq:typeI-UB},\eqref{eq:typeI-LB}}{=} 6(k-1)|X| - 3\sum_{i=2}^k |\cA_i| - 3k(k-1) + |\cA_k|\\
   &\hspace{2em} - 2(k-2)|X| + \sum_{i=2}^{k-1}|\cA_i| + (k-1)(k-2)\\[5pt]
   &= 2\left((2k-1)|X| - k^2 + 1 - \sum_{i=2}^k|\cA_i|\right).
 \end{align*}
 Thus we have with Lemma~\ref{lem:halfplanes} that $|\cE^k_1| + |\cE^k_2| = (2k-1)|X| - k^2 + 1 - \sum_{i=2}^k|\cA_i| = (2k-1)|X| - k^2 + 1 - \sum_{i=1}^{k-1}a_i$, as desired.
\end{proof}

\section{Conclusions and remarks}

In this paper we investigated $k$-uniform hypergraphs whose vertex set $X$ is a set of points in the plane and whose hyperedges are exactly those $k$-subsets of $X$ that can be captured by a homothetic copy of a fixed convex compact set $S$.
These are so called $k$-uniform $S$-capturing hypergraphs.
We have shown that every such hypergraph has at most $(2k-1)|X| -k^2 + 1 -\sum_{i=1}^{k-1} a_i$ hyperedges and that this is tight for every nice shape $S$.
Here $a_i$ denotes the number of $i$-subsets of $X$ that can be separated with a straight line.

\medskip

\noindent
As an immediate corollary we obtain that if $S$ is nice, then the total number of subsets of $X$ captured by some homothet of $S$ is given by the cake number $\binom{|X|}{3} + \binom{|X|}{2} + \binom{|X|}{1}$.
Moreover, we obtain a bound on the number of hyperedges of size \emph{at most $k$}:
For every point set $X$, every convex set $S$ and every $k \geq 1$ at most $k^2|X|$ different non-empty subsets of $X$ of size at most $k$ can be captured by a homothetic copy of $S$.
This refines the recent $O(k^2|X|)$ bound by Buzaglo, Pinchasi and Rote~\cite{BuPiRo13}.

Another interesting open problem concerns topological hypergraphs defined by a family of pseudodiscs.
Here, the vertex set $X$ is again a finite point set in the plane and every hyperedge is a subset of $X$ surrounded by a closed Jordan curve such that any two such curves have at most two points in common.
Buzaglo, Pinchasi and Rote~\cite{BuPiRo13} prove that every pseudodisc topological hypergraph has at most $O(k^2|X|)$ hyperedges of size \emph{at most $k$}.

\begin{problem}
 What is the maximum number of hyperedges of size \emph{exactly} $k$ in a pseudodisc topological hypergraph?
\end{problem}

As we learned after submission, Chevallier~\textit{et al.}~\cite{CFSS14,CFSS15} have an unpublished manuscript in which they prove that every inclusion-maximal $k$-uniform \emph{convex} pseudodisc topological hypergraph with $n$ vertices has exactly $(2k-1)|X| -k^2 + 1 -\sum_{i=1}^{k-1} a_i$ hyperedges.
This independent result implies our Theorem~\ref{thm:main}.
However, their proof is $40$ pages long and involves higher-order Voronoi diagrams and higher-order centroid Delaunay triangulations.
Our proof uses a completely different technique; it is short and completely self-contained, and hence is of independent interest.

\section*{Acknowledgments}

We would like to thank G\"unter Rote and J\'anos Pach for helpful remarks and referring us first to~\cite{BuPiRo13} and later on to~\cite{CFSS14}.

\bibliographystyle{abbrv}
\bibliography{lit}

\section*{Appendix}

For a set $S$, we call a line $\ell$ \emph{touching} $S$ if $\ell$ intersects $S$ in exactly one point.
So, $S$ is nice if and only if for each point on its boundary there is exactly one touching line touching $S$ at this point.

\setcounter{theorem}{5}
\begin{lemma}
 If $S$ is a nice shape, $S_1$ and $S_2$ are distinct $S$-ranges, then each of the following holds.
 \begin{enumerate}[label = (\roman*)]
  \item  $\partial S_1 \cap \partial S_2$ is a set of at most two points.
  
  \item If $\partial S_1 \cap \partial S_2 = \{p,q\}$ and $L$ and $R$ are the two open halfplanes defined by $\overline{pq}$, then
   \begin{itemize}
    \item $S_1 \cap L \subset S_2 \cap L$ and $S_1 \cap R \supset S_2 \cap R$ or
    \item $S_1 \cap L \supset S_2 \cap L$ and $S_1 \cap R \subset S_2 \cap R$.
   \end{itemize}
  
  \item Any three non-collinear points lie on the boundary of a unique $S$-range.
    
  \item For a subset of points $X \subset \Real^2$ and any $Y \subset X$, $|Y| \geq 2$, that is captured by some $S$-range there exists at least one $S$-range $S'$ with $Y = X \cap S'$ and $|\partial S' \cap X| \geq 2$.
 \end{enumerate}
\end{lemma}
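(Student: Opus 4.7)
The plan is to prove the four parts in turn, each time exploiting that niceness of $S$ amounts to strict convexity (no line segments in $\partial S$) together with $C^1$-smoothness (a unique tangent line at every boundary point of $S$).

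Part (i) is the classical pseudodisc property for positive homothets of a strictly convex smooth body. Assume toward contradiction that $\partial S_1 \cap \partial S_2$ contains three distinct points $a,b,c$. After a simultaneous rescaling, one may assume $S_2 = S$ and $S_1 = \lambda S + t$ with $0 < \lambda \leq 1$. If $\lambda = 1$, the three chords $\overline{(a-t)a}$, $\overline{(b-t)b}$, $\overline{(c-t)c}$ of $\partial S$ are parallel and of equal length $|t|$, but on a strictly convex smooth body no chord length is realized by three parallel chords. If $\lambda < 1$, the contraction $x \mapsto \lambda x + t$ has a unique fixed center $c_0$, and the three chords of $\partial S$ through $c_0$ containing $a,b,c$ respectively split $c_0$ in the same ratio $\lambda : 1$, which strict convexity again forbids for three distinct chords.

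For (ii), the chord $\overline{pq}$ cuts each $S_i$ into two convex caps. By (i) the arc of $\partial S_1$ in $L$ cannot recross $\partial S_2$ in $L$, so it lies either entirely in the closure of $S_2$ (whence $S_1 \cap L \subseteq S_2 \cap L$) or entirely outside $S_2$ (whence $S_2 \cap L \subseteq S_1 \cap L$); the same dichotomy holds in $R$. If the same inclusion held on both sides, one range would be contained in the other, and smoothness plus strict convexity would force the boundaries to meet in at most one tangency, contradicting $|\partial S_1 \cap \partial S_2| = 2$. Part (iii) then follows: uniqueness is immediate from (i), and for existence one fixes $a,b$ and considers the continuous one-parameter family of $S$-ranges with $a,b \in \partial S'$ on the side of $\overline{ab}$ containing $c$; parameterized by, say, the arc-length position of the pre-image of $a$ on $\partial S$, the corresponding arc of $\partial S'$ sweeps the open halfplane continuously and monotonically by (ii), so the intermediate value theorem produces exactly one parameter value for which the arc passes through $c$.

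For (iv), the plan is a minimization argument. Let
\[
 \overline{\mathcal{F}} = \{(\lambda,t) \in (0,\infty) \times \Real^2 \,:\, Y \subseteq \lambda S + t,\ (X \setminus Y) \cap \operatorname{int}(\lambda S + t) = \emptyset\}.
\]
This set is nonempty (by hypothesis), closed in the parameter space, and $\lambda$ is bounded below on it away from $0$ by diameter considerations (since $|Y| \geq 2$), so a minimizer $S^*$ exists. A dimension count on the three-dimensional parameter space shows $|\partial S^* \cap X| \geq 2$, and a Farkas-type argument (ruling out two parallel outward normals at distinct boundary points of $S$, which strict convexity forbids) shows that at least two of those boundary points, say $y_1, y_2$, lie in $Y$. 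If $\partial S^* \cap (X \setminus Y) = \emptyset$ we are done. Otherwise, perturb within the one-parameter family $\cS_{y_1 y_2}$ of Lemma~\ref{lem:next-S-range} in the direction that shrinks the side of $\overline{y_1 y_2}$ containing any stray $x \in (X \setminus Y) \cap \partial S^*$: this pushes $x$ strictly outside while $y_1, y_2$ remain on $\partial S'$ and the remaining points of $Y$, lying in the interior of $S^*$, stay inside $S'$ under a sufficiently small perturbation. The hardest step is this Farkas-type argument in (iv) forcing the two pinned boundary points to lie in $Y$ rather than in $X \setminus Y$; parts (i)--(iii) are then essentially local topological arguments once the pseudodisc property is in hand.
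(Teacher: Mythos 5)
Your parts (ii) and (iii) are essentially correct and take the same route as the paper (the halfplane dichotomy via connectedness of the boundary arcs plus (i), then uniqueness from (i) and existence by an intermediate-value argument in the family $\cS_{ab}$). The genuine problems are in part (i) for $\lambda<1$ and in the last step of part (iv).

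In part (i), the translation case is fine (three parallel chords of equal length are excluded by strict concavity of the chord-length function in a fixed direction). But in the contraction case the sentence ``which strict convexity again forbids for three distinct chords'' is not a proof: the configuration of six boundary points $a,b,c$ and their preimages $a',b',c'$ on the three secants through $c_0$ \emph{is} the pseudodisc property you are trying to establish, merely restated. (Also, if $c_0\in\operatorname{int} S$ the chords do not pass through $c_0$ at all --- in that case $\lambda S+t\subseteq\operatorname{int} S$ and there is nothing to prove; only $c_0\notin S$ is the substantive case.) An actual argument is needed, e.g.: the three secants from the external point $c_0$ meet $\partial S$ in the cyclic order $a,b,c,c',b',a'$, so these six points form a convex hexagon in which $\overline{ac}$ and $\overline{a'c'}$ are parallel diagonals cutting off the single vertices $b$ and $b'$ on opposite outer sides, while positive homothety forces $b-a=\lambda(b'-a')$ with $\lambda>0$; comparing signed distances to the two parallel lines gives a contradiction. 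The paper instead compares the two positively homothetic triangles $f_1(\{p,q,r\})$ and $f_2(\{p,q,r\})$ inscribed in $\partial S$. Either way, this step cannot be waved through.

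In part (iv), the minimization over $\overline{\mathcal{F}}$ and the claim that at least two points of $Y$ lie on $\partial S^*$ are fine (simply shrink $S^*$ homothetically towards the unique point of $Y\cap\partial S^*$, or towards an interior point if there is none; no Farkas condition is needed). The gap is the final perturbation. Because you only exclude $X\setminus Y$ from the \emph{interior}, and the lemma assumes no general position, the minimizer $S^*$ may have several points of $X\setminus Y$, and further points of $Y$, on its boundary. A perturbation inside $\cS_{y_1y_2}$ shrinks one side of $\overline{y_1y_2}$ and strictly enlarges the other (by your own part (ii)), so it pushes any stray boundary point on the enlarged side into the interior of $S'$ and expels any boundary point of $Y$ on the shrunken side; if stray points sit on both sides of $\overline{y_1y_2}$, no choice of direction works. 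The paper circumvents this entirely: it starts from a genuine capturing range (so $X\setminus Y$ is strictly outside), contracts until a first point $p\in Y$ reaches the boundary, and then deforms inside $\cS_{pq}$ for an \emph{auxiliary} point $q\notin X$ chosen so that every intermediate range is contained in the union of the initial and a tiny final range, hence never acquires a point of $X\setminus Y$; a second point of $Y$ must reach the boundary before $Y$ ceases to be contained. You would need either this device or a substantially stronger analysis of the boundary configuration of your minimizer.
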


\begin{proof}
 \begin{enumerate}[label = (\roman*)]
  \item We show that for any two $S$-ranges $S_1$, $S_2$ such that $\{p, q, r\} \subseteq \partial S_1 \cap \partial S_2$, for distinct $p, q, r$, $S_1$ coincides with $S_2$.
  Assume not, and consider homothetic maps $f_1$, $f_2$ from $S_1$, $S_2$ to $S$, respectively.
  Let $p_i, q_i, r_i$ be the images of $p, q, r$ under $f_i$, $i=1,2$.
  Then we have that $p_1, q_1, r_1$ and $p_2, q_2, r_2$ form congruent triangles $T_1$, $T_2$ with vertices on $\partial S$.
  If these two triangles coincide, then $S_1=S_2$.
  Otherwise consider two cases: a corner of one triangle is contained in the interior of the other triangle or not.
  If, without loss of generality, a corner of $T_1$ is in the interior of $T_2$, then by convexity, this corner can not be on $\partial S$.
  Otherwise, $T_1$ and $T_2$ are either disjoint or share a point on the corresponding side.
  In either case, one of the sides of $T_1$ is on the same line as the corresponding side of $T_2$, otherwise convexity of $S$ is violated.
  Then, the boundary of $S$ contains $3$ collinear points, a contradiction to the fact that $S$ is a nice shape.
  
  \item We have, without loss of generality that $S_1 \cap L \subset S_2 \cap L$.
  If $S_1 \cap R \supset S_2 \cap R$, we are done.
  Otherwise, we have that $S_1 \cap R \subset S_2 \cap R$, and, in particular $S_1\subseteq S_2$.
  Note that at $p$ and $q$ the $S$-ranges $S_1$ and $S_2$ have the same touching lines.
  Indeed, these lines are unique since $S$ is nice.
  Consider maps $f_1$ and $f_2$ as before.
  We see that $p,q$ are mapped into the same pair of points under both maps.
  Thus $S_1 = S_2$, a contradiction.
 \end{enumerate}

 \medskip
 
 \noindent
 The remaining two items can be proven by considering two points $p,q$ in the plane and the set $\cS_{pq}$ of all $S$-ranges $S'$ with $p,q \in \partial S'$.
 Indeed, given fixed $p$ and $q$ there is a bijection $\phi$ between the $S$-ranges in $\cS(p,q)$ and the set $\mathscr{L}(p,q)$ of lines whose intersection with $S$ is a non-trivial line segment parallel to the line $\overline{pq}$.
 We refer to Figure~\ref{fig:lines-to-ranges} for an illustration.
 
 \begin{figure}[htb]
  \centering
  \subfigure[\label{fig:lines-to-ranges}]{
   \includegraphics{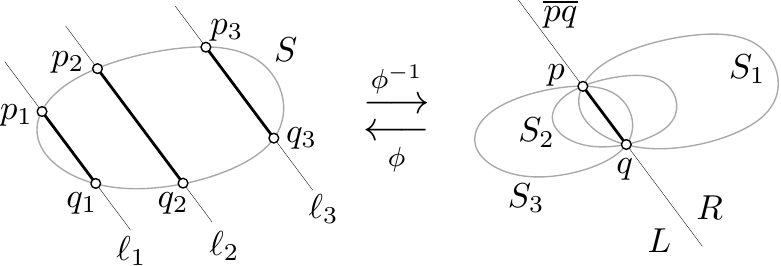}
  }
  \hfill
  \subfigure[\label{fig:small-facts-1}]{
   \includegraphics{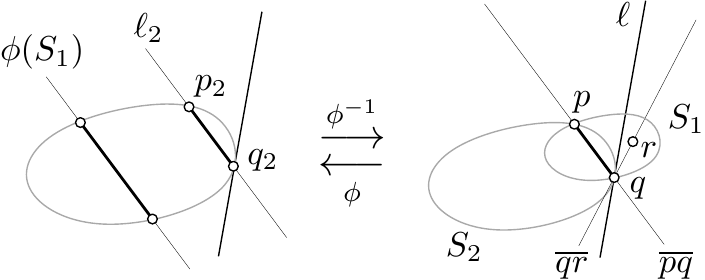}
  }
  \caption{\subref{fig:lines-to-ranges} A nice shape $S$, two points $p,q$ in the plane, three lines $\ell_1,\ell_2,\ell_3 \in \mathscr{L}(p,q)$ and the corresponding $S$-ranges $S_1,S_2,S_3 \in \cS_{pq}$. \subref{fig:small-facts-1} Given an $S$-range $S_1 \in \cS_{pq}$ with $r \in S_1$ we can find an $S$-range $S_2 \in \cS_{pq}$ with $r \notin S_2$.}
 \end{figure}
 
 We verify that this bijection exists:
 For a line $\ell' \in \mathscr{L}(p,q)$ such that $\ell \cap \partial S = \{p_1, q_1\}$, let $S'$ be an $S$-range obtained by contracting and translating $S$ such that $p_1$ and $q_1$ are mapped into $p$ and $q$, respectively.
 Let $\phi^{-1}(\ell)=S'$.
 Given an $S'$-range with $p,q \in \partial S$, consider a translation and contraction that maps $S'$ to $S$.
 Let $p_1$ and $q_1$ be the images of $p$ and $q$ under this transform, then let $\phi(S')=\ell$, where $\ell$ is a line through $p_1$ and $q_1$.
 See again Figure~\ref{fig:lines-to-ranges} for an illustration.

 \begin{enumerate}[label = (\roman*), start = 3]
  \item Consider any three non-collinear points $p,q,r$ in the plane. We shall first show that there is an $S$-range with $p,q,r$ on its boundary. Let $S_1$ be an $S$-range of smallest area containing all three points. Clearly, $|\partial S_1 \cap \{p,q,r\}|$ is either $2$ or $3$. In the latter case we are done. So assume without loss of generality that $\partial S_1 \cap \{p,q,r\} = \{p,q\}$. 
  
  Now, we claim that there is another $S$-range $S_2$ that contains $p$ and $q$ but not $r$, with $p,q \in \partial S_2$. To find $S_2$, containing $p,q$ on its boundary and not containing $r$, consider the triangle $p,q,r$ and a line $\ell$ that goes through $q$ having $p$ and $r$ on the different sides. 
  Assume without loss of generality that $q$ is the lowest point among $p,q,r$, and that $\overline{qr}$ and $\ell$ have positive slopes. 
  Next, let $q_2$ be a point on $\partial S$ whose touching line is parallel to $\ell$ and such that $S$ is above this touching line. See Figure~\ref{fig:small-facts-1}.
  Let $\ell_2$ be the line parallel to $\overline{pq}$ and containing $q_2$ and let $p_2$ be the unique point in $\partial S \cap \ell_2$ different from $q_2$. Note that $|\ell_2 \cap \partial S| = 2$ follows from the fact that $S$ is a nice shape and $\ell_2$ and $\ell$ have different slopes.
  Let $S_2 = \phi^{-1}(\ell_2)$. We see that $S_2$ is above $\ell$, but $r$ is below $\ell$. So, $r\notin S_2$. 
    
  Since $\phi$ is continuous and any two lines in $\mathscr{L}(p,q)$ can be continuously transformed into each other within $\mathscr{L}(p,q)$, we conclude that $S_1$ can be continuously transformed into $S_2$ within $\cS_{pq}$. Thus, there is an $S$-range $S' \in  \cS_{pq}$ such that $r\in \partial S'$. This proves that any three non-collinear points $p,q,r$ lie on the boundary of some $S$-range. The uniqueness of such a range follows from~\ref{enum:pseudodisc}.
 
  \item Let $Y$ be  a hyperedge in $\cH(X,S,k)$ and $S_1$ be an $S$-range capturing $Y$.
  Contract $S_1$ until the resulting range $S_2$  contains at least one point, $p$ of $Y$, 
  on its boundary. If $|\partial S_2 \cap Y| \geq 2$, we are done. 
  Otherwise, consider a small $S$-range $S_3$ containing $p$ on its boundary and not containing any other points of $X$. Let $q$ be the second point in $\partial S_2 \cap \partial S_3$. 
  Similarly to the previous argumentation, $S_2$ can be continuously transformed into $S_3$ within $ \cS_{pq}$. Each intermediate $S$-range is contained in $S_2\cup S_3$ and thus contains no points of $X\setminus Y$. One of the intermediate $S$-ranges will contain another point of $Y$ on its boundary.
 \end{enumerate} 
\end{proof}

\begin{lemma}
 For any point set $X$, positive integer $k$ and a convex compact set $S$, there is a nice shape $S'$ and a point set $X'$ in general position with respect to $S'$, such that $|X'|=|X|$ and the number of edges in $\cH(X',S',k)$ is at least as large as the number of edges in $\cH(X,S,k)$. 
\end{lemma}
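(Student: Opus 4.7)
The plan is to construct $S'$ and $X'$ in two stages. First, I replace $S$ by a nearby nice shape $S'$ that contains $S$ in its interior but is Hausdorff-close to $S$, while preserving every hyperedge of $\cH(X,S,k)$. Second, I perturb $X$ generically within a small ball around each of its points to reach general position with respect to $S'$ without destroying any hyperedges. Throughout I identify the perturbed point set with the original $X$ via the obvious bijection, which also gives $|X'|=|X|$.

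For the first stage I invoke the classical fact that every convex compact set $S$ with non-empty interior admits arbitrarily close nice outer approximations: there exist $C^1$ strictly convex bodies $S_n$ with $S\subset\operatorname{int}(S_n)$ and $d_H(S,S_n)\to 0$. To choose the right approximation, I associate to each hyperedge $Y$ of $\cH(X,S,k)$ a witness $S$-range $R_Y = t_Y S + v_Y$ with $X\cap R_Y = Y$ and set
\[
 \delta = \min_Y \min_{p\in X\setminus Y}\operatorname{dist}(p,R_Y) > 0, \qquad T = \max_Y t_Y.
\]
Taking $n$ so that $d_H(S,S_n) < \delta/T$ and putting $S' = S_n$, the $S'$-range $R'_Y = t_Y S' + v_Y$ satisfies $R_Y \subseteq \operatorname{int}(R'_Y)$ and $d_H(R_Y,R'_Y) = t_Y\cdot d_H(S,S_n) < \delta$. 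The inclusion gives $Y\subset\operatorname{int}(R'_Y)$, and the Hausdorff bound forces $(X\setminus Y)\cap R'_Y = \emptyset$ because every point of $X\setminus Y$ is at distance at least $\delta$ from $R_Y$. So every hyperedge of $\cH(X,S,k)$ remains a hyperedge of $\cH(X,S',k)$.

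For the second stage, observe that each $Y$ now has strict slack: there is $\rho_Y > 0$ such that any displacement of each point of $X$ by less than $\rho_Y$ keeps the image of $Y$ inside $\operatorname{int}(R'_Y)$ and the image of $X\setminus Y$ outside the closed range $R'_Y$. Set $\rho = \min_Y \rho_Y > 0$. The three general-position conditions (no two points on a vertical line, no three collinear, no four on the boundary of a common $S'$-range) each confine the configuration to a finite union of smooth hypersurfaces in $(\Real^2)^{|X|}$ and hence to a Lebesgue null set; here the third condition relies on Lemma~\ref{lem:small-facts}~\ref{enum:unique-range} to reduce ``four concyclic'' to a codimension-one equation on the fourth point. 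A generic perturbation of $X$ of magnitude less than $\rho$ therefore produces $X'$ in general position with respect to $S'$, and by choice of $\rho$ every hyperedge $Y$ of $\cH(X,S,k)$ is still captured by the same $S'$-range $R'_Y$, giving the desired inequality on hyperedge counts.

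The main obstacle is the first stage, namely producing the nice outer approximation $S'$ when $S$ has both corners and flat boundary segments (for example, an axis-aligned rectangle), where one must simultaneously smooth the corners and round out the flat pieces while keeping $S$ strictly inside. Concretely this can be arranged by first forming the Minkowski sum $S+\varepsilon B$ with a small disc $B$ (which makes the boundary $C^1$ but leaves flat arcs) and then deforming the support function by a small strictly convex positively homogeneous correction that destroys the remaining flat arcs without reintroducing corners; alternatively one can simply quote the standard theorem that convex bodies are Hausdorff-approximable by $C^\infty$ strictly convex bodies. Once $S'$ is in hand, the rest is a routine compactness-plus-measure-zero genericity argument.
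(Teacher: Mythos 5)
Your proposal is correct and follows essentially the same two-stage strategy as the paper's proof: use the positive slack of one witness range per hyperedge to replace $S$ by a nearby nice shape whose homothets still capture every hyperedge, and then perturb the points of $X$ slightly, avoiding a thin (measure-zero) set of degenerate configurations, to reach general position with respect to $S'$ without destroying any hyperedge. The differences are only in bookkeeping --- you quantify the slack via Hausdorff distance and a maximal scale factor where the paper nests the nice shape between a capturing range and a slightly dilated copy chosen by a minimal stretching factor, and you perform one global generic perturbation where the paper removes ``bad'' points one at a time by avoiding finitely many forbidden curves --- so the substance of the argument is the same.
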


\begin{proof}
 First, we shall modify $S$ slightly.
 Consider all hyperedges of $\cH(X,S,k)$ and for each of them choose a single capturing $S$-range.
 Recall that two $S$-ranges are contraction/dilation of one-another if in their corresponding homothetic maps the origin is mapped to the same point.
 For each hyperedge consider two distinct capturing $S$-ranges $S_1,S_2$ with $S_2$ being a dilation of $S_1$.
 Among all hyperedges, consider the one for which these two $S$-ranges, $S_1 \subset S_2$ are such that the stretching factor between $S_1$ and $S_2$ is the smallest.

 Let $S'$ be a nice shape, $S_1 \subset S' \subset S_2$. Replace each of the other capturing ranges 
 with an appropriate $S'$-range.
 Now, we have that $\cH(X,S',k)$ has at least at many hyperedges as $\cH(X,S,k)$.
 Next, we shall move the points from $X$ slightly so that the new set $X'$ is in a general position with respect to $S'$ and contains as many hyperedges as $\cH(X,S',k)$.
  
 Observe first that since $X$ is a finite point set, we can move each point of $X$ by some small distance, call it $\epsilon$ in any direction such that the resulting hypergraph has the same set of hyperedges as  $\cH(X,S',k)$.

 Call a point $x\in X$ \emph{bad} if either $x$ is on a vertical line together with some other point of $X$,  $x$ is on a line with two other points of $X$, or $x$ is on the boundary of an $S'$-range together with at least three other vertices of $X$.
 
 We shall move a bad $x$ such that a new point set has smaller number of bad points and such that the resulting hypergraph has at least as many edges as $\cH(X,S',k)$.
 From a ball $B(x,\epsilon)$ delete all vertical lines passing through a point of $X$, delete all lines that pass through at least two points of $X$ and delete all boundaries of all $S'$-ranges containing at least $3$ points of $X$.
 All together we have deleted at most $n + \binom{n}{2} + \binom{n}{3}$, where $n = |X|$, curves because there is one vertical line passing through each point, at most $\binom{n}{2}$ lines passing through some two points of $X$ and at most $\binom{n}{3}$ $S'$-ranges having some three points of $X$ on their boundary.
 So, there are points left in $B(x, \epsilon)$ after this deletion.
 Replace $x$ with an available point $x'$ in $B(x,\epsilon)$.
 Observe that $x'$ is no longer bad in a new set $X- \{x\}\cup \{x'\}$.
 Moreover, if $z\in X$, $z\neq x$ was not a bad point, it is not a bad point in a new set $X- \{x\}\cup \{x'\}$.
 Indeed, since $x'$ is not on a vertical line with any other point of $X$ and not on any line containing two points of $X$, $z$ is not on a bad line with $x'$.
 Moreover, since $x'$ is not on the boundary of an $S'$-range together with at least three other points of $X$, $z$ can not be together with $x'$ on the boundary of an $S'$-range containing more than $3$ points of $X$ on its boundary.
\end{proof}

\begin{lemma}
 Let $p,q$ be two points such that no four points in $X \cup \{p,q\}$ lie on the boundary of an $S$-range.
 Let $L$ be a halfplane defined by $\overline{pq}$.
 Then the following holds.
 \begin{enumerate}[label = (\roman*)]
  \item The $S$-ranges in $\cS_{pq}$ are linearly ordered, denoted by $\prec_{pq}$, by inclusion of their intersection with $L$:
  \[
   S_1 \prec_{pq} S_2 \quad \Leftrightarrow \quad S_1 \cap L \subset S_2 \cap L \qquad \text{for all } S_1,S_2 \in \cS_{pq}
  \]
  \item For each $S_1 \in \cS_{pq}$ there exists a $\prec_{pq}$-minimal $S_2 \in \cS_{pq}$ with
  \[
   (\partial S_2 \setminus \partial S_1) \cap X \neq \emptyset \qquad \text{and} \qquad S_1 \prec_{pq} S_2
  \]
  if and only if $S_1 \triangle L$ contains a point from $X$ in its interior.
 \end{enumerate}
\end{lemma}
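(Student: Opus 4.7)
My plan is to derive (i) directly from Lemma~\ref{lem:small-facts} and then handle the two directions of (ii) by a case analysis (forward) and a continuous deformation (backward).

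For (i), observe that any two distinct $S_1, S_2 \in \cS_{pq}$ satisfy $\{p,q\} \subseteq \partial S_1 \cap \partial S_2$, so by Lemma~\ref{lem:small-facts}\ref{enum:pseudodisc} we in fact have $\partial S_1 \cap \partial S_2 = \{p,q\}$ exactly. Lemma~\ref{lem:small-facts}\ref{enum:subset-halfplane} then forces either $S_1 \cap L \subsetneq S_2 \cap L$ or $S_2 \cap L \subsetneq S_1 \cap L$. Hence any two members of $\cS_{pq}$ are comparable in $\prec_{pq}$, and antisymmetry and transitivity of set inclusion do the rest.

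For the forward direction of (ii), suppose $S_2$ is as described and pick $x \in (\partial S_2 \setminus \partial S_1) \cap X$. Since $p,q \in \partial S_2$ and $S_2$ is convex, $\partial S_2 \cap \overline{pq} = \{p,q\}$, so $x \notin \overline{pq}$, and therefore $x \in L$ or $x \in R$. If $x \in L$ then $x$ cannot lie in $\mathrm{int}(S_1)$: otherwise a small ball around $x$ would sit in $\mathrm{int}(S_1) \cap L \subseteq S_1 \cap L \subsetneq S_2 \cap L \subseteq S_2$, placing $x$ in $\mathrm{int}(S_2)$, a contradiction; hence $x \in L \setminus S_1 \subseteq \mathrm{int}(S_1 \triangle L)$. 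If instead $x \in R$, then $S_2 \cap R \subsetneq S_1 \cap R$ together with $x \in \partial S_2 \cap R$ and $x \notin \partial S_1$ forces $x \in \mathrm{int}(S_1) \cap R \subseteq \mathrm{int}(S_1 \triangle L)$. Either way, $X \cap \mathrm{int}(S_1 \triangle L) \neq \emptyset$.

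For the backward direction I would reuse the bijection $\phi \colon \cS_{pq} \to \mathscr{L}(p,q)$ from the appendix proof of Lemma~\ref{lem:small-facts}\ref{enum:unique-range} and move the line $\phi(S_1)$ continuously through $\mathscr{L}(p,q)$ in the unique direction along which the $L$-part of the associated range grows (as dictated by the order from (i)). This produces a continuous one-parameter family $(S_t)_{t \geq 0}$ with $S_0 = S_1$ whose boundary sweeps outward in $L$ and inward in $R$. A point $x \in X \cap \mathrm{int}(S_1 \triangle L)$ lies either in $L \setminus S_1$, where the outward sweep in $L$ eventually meets it, or in $\mathrm{int}(S_1) \cap R$, where the inward sweep in $R$ eventually meets it. Taking the smallest $t^* > 0$ at which some point of $X$ joins $\partial S_{t^*} \setminus \partial S_1$ and setting $S_2 = S_{t^*}$ delivers the desired range. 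Its $\prec_{pq}$-minimality follows from (i): any candidate $S' \in \cS_{pq}$ with $S_1 \prec_{pq} S' \prec_{pq} S_2$ corresponds to a parameter in $(0, t^*)$ and hence, by choice of $t^*$, satisfies $(\partial S' \setminus \partial S_1) \cap X = \emptyset$.

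The main obstacle I anticipate is making the backward argument fully rigorous: one must verify that $\phi$ is an order-preserving homeomorphism between the relevant arc of $\cS_{pq}$ and $\mathscr{L}(p,q)$, so that a continuous sweep parametrized by lines really corresponds to an increasing chain under $\prec_{pq}$, and one must also check that the sweep actually reaches each prescribed point of $\mathrm{int}(S_1 \triangle L)$ before $S_t$ degenerates to the halfplane $L$ (at which point we would leave $\cS_{pq}$). Here the hypothesis that no four points of $X \cup \{p,q\}$ lie on a common $S$-range boundary is crucial: it guarantees that $\partial S_{t^*}$ acquires at most one new $X$-point, so $S_2$ is well-defined as an element of $\cS_{pq}$ rather than as a limiting halfplane.
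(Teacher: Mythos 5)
Your part~(i) and the ``only if'' direction of~(ii) are fine and essentially coincide with the paper's proof: (i) is immediate from Lemma~\ref{lem:small-facts}~\ref{enum:pseudodisc},\ref{enum:subset-halfplane}, and your two-case analysis (new boundary point in $L$ versus in $R$) is exactly the argument given in the appendix. The gap is in the ``if'' direction. Your sweep argument rests on two claims that you explicitly flag but never establish: that the bijection $\phi\colon \cS_{pq}\to\mathscr{L}(p,q)$ is an order-preserving homeomorphism (so that the one-parameter family $(S_t)$ is an increasing chain for $\prec_{pq}$ and exhausts the up-set of $S_1$), and that the sweep actually reaches every point of the interior of $S_1\triangle L$ before degenerating to a halfplane. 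Both are needed: without order-preservation your ``first event time $t^*$'' need not yield a $\prec_{pq}$-minimal element of $\cS_{pq}$ (minimality along the parameter is not the same as minimality in $\prec_{pq}$ unless every $S'\succ_{pq} S_1$ occurs at some $t>0$), and without the reachability claim the event time for your chosen witness point may simply not exist. As written, the backward direction is therefore a plan rather than a proof.

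Note that the continuity machinery is avoidable, and this is how the paper argues: for each $r\in X$ in the interior of $S_1\triangle L$, Lemma~\ref{lem:small-facts}~\ref{enum:unique-range} gives the unique $S$-range $S_r\in\cS_{pq}$ with $r\in\partial S_r$ (here $p,q,r$ are non-collinear because $r\notin\overline{pq}$). These finitely many ranges are linearly ordered by~(i), so one of them, say $S_2$, is $\prec_{pq}$-minimal among them; the same two-case analysis you already used for the forward direction (if $r\in L$ then $r\notin S_1$, so $S_1\cap L\subset S_2\cap L$; if $r\in R$ then $r\in S_1\setminus\partial S_1$, so $S_2\cap R\subset S_1\cap R$) shows $S_1\prec_{pq}S_2$, and your forward direction, applied to an arbitrary competitor $S'$ with $S_1\prec_{pq}S'$ and $(\partial S'\setminus\partial S_1)\cap X\neq\emptyset$, shows that $S'=S_{r'}$ for some such $r'$, whence $S_2\preceq_{pq}S'$. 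Rewriting your backward direction along these lines closes the gap without any appeal to continuity of $\phi$.
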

 
\begin{proof}
 \begin{enumerate}[label = (\roman*)]
  \item This follows immediately from Lemma~\ref{lem:small-facts}~\ref{enum:subset-halfplane}.

  \item First assume that $(S_1 \triangle L) \cap X \neq \emptyset$.
   For each point $r \in (S_1 \triangle L) \cap X$ consider the $S$-range $S_r$ with $\{p,q,r\} \subset \partial S_r$.
   The existence and uniqueness of $S_r$ is given by Lemma~\ref{lem:small-facts}~\ref{enum:unique-range}.
   By the first item, the $S$-ranges in $\{S_r \mid r \in (S_1 \triangle L) \cap X\}$ are linearly ordered by inclusion of their intersection with $L$.
   Hence there exists an $S$-range $S_2$ in this set, which is $\prec_{pq}$-minimal.
   
   Let $r$ be the point in $\partial S_2$ different from $p$ and $q$.
   If $r \in L$, then $r \notin S_1$ and thus $L \cap S_1 \subset L \cap S_2$.
   If $r \notin L$, then $r \in S_1 \setminus \partial S_1$ and thus $S_2 \cap R \subset S_1 \cap R$, where $R$ is the other halfplane defined by $\overline{pq}$.
   In any case we have $S_1 \prec_{pq} S_2$.
   Moreover, $r \in (\partial S_2 \setminus \partial S_1) \cap X$ and hence $S_2$ is the desired $S$-range.

   \medskip

   Now assume that $S_2$ is a $\prec_{pq}$-minimal $S$-range in $\cS_{pq}$ with $(\partial S_2 \setminus \partial S_1) \cap X \neq \emptyset$ and $S_1 \prec_{pq} S_2$.
   We claim that the point $r$ in $(\partial S_2 \setminus \partial S_1) \cap X$ lies in the interior of $S_1 \triangle L$.
   If $r \in R$, then $r \in S_1$, because $\partial S_2 \cap R \subset S_1 \cap R$.
   On the other hand, if $r \in L$, then $r \notin S_1$, because $\partial S_1 \cap L \subset S_2 \cap L$.
   Hence $r$ lies in the interior of $S_1 \triangle L$, as desired.
 \end{enumerate}
\end{proof}

\end{document}